\theoremstyle{plain}
\newtheorem{thm}{Theorem}[section]
\newtheorem{cor}[thm]{Corollary}
\newtheorem{lem}[thm]{Lemma}
\newtheorem{hypo}[thm]{Hypothesis}
\newtheorem{prop}[thm]{Proposition}
\theoremstyle{definition}
\newtheorem{defn}[thm]{Definition}
\newtheorem{exmp}[thm]{Example}
\newtheorem{rem}[thm]{Remark}
\newcommand\Q{{\mathbb Q}}
\newcommand\Z{{\mathbb Z}}
\newcommand\N{{\mathbb N}}
\newcommand{\Gal}{\mathop{\mathrm{Gal} }\nolimits}
\newcommand{\GL}{\mathop{\mathrm{GL} }\nolimits}
\newcommand{\GSp}{\mathop{\mathrm{GSp} }\nolimits}
\newcommand\Oh{\mathcal{O}}
\newcommand\F{\mathbb{F}}
\newcommand\fm{\mathfrak{m}}
\newcommand\wild{\mathrm{w}}
\newcommand{\Aut}{\mathop{\mathrm{Aut} }\nolimits}
\title{Formal groups, supersingular abelian varieties and tame
ramification}
\date{}
\author{Sara Arias-de-Reyna}
\begin{document}

\maketitle

\footnotetext{\hskip -0.6cm \emph{2010 Mathematics Subject
Classification:} 14L05,
11G10, 11S15\\
\emph{Key words and phrases:} tame ramification, formal group,
supersingular abelian variety\\
Research supported by a FPU predoctoral grant AP-20040601 of the MEC
and partially supported by MEC grant MTM2006-04895.}

\begin{abstract}
Let us consider an abelian variety defined over $\mathbb{Q_{\ell}}$
with good supersingular reduction. In this paper we give explicit
conditions that ensure that the action of the wild inertia group on
the $\ell$-torsion points of the variety is trivial. Furthermore we
give a family of curves of genus $2$ such that their Jacobian
surfaces have good supersingular reduction and  satisfy these
conditions.  We address this question by means of a detailed study
of the formal group law attached to abelian varieties.
\end{abstract}

\section{Introduction}

Let $\ell$ be a prime number and $A/\Q_{\ell}$ be an abelian variety
with good supersingular reduction. In this paper we  study  the
action of the wild inertia group $I_{\wild}\subset
\Gal(\overline{\Q}_{\ell}/\Q_{\ell})$ on the $\ell$-torsion points
of $A$. More precisely, we will address the problem of finding
explicit conditions that ensure that the Galois extension
$\Q_{\ell}(A[\ell])/\Q_{\ell}$ obtained by adjoining to the field of
$\ell$-adic numbers the coordinates of the $\ell$-torsion points of
$A$ is tamely ramified.

Let $E/\Q_{\ell}$ be an elliptic curve. If it has good supersingular
reduction, then the field extension $\Q_{\ell}(E[\ell])/\Q_{\ell}$
is tamely ramified (cf. \cite{Proprietesgaloisiennes}, $\S$ 1). The
proof relies on a detailed study of the formal group law attached to
$E$. This formal group law has dimension 1 and height 2. The set of
elements of $\overline{\Q}_{\ell}$ with positive $\ell$-adic
valuation can be endowed with a group structure by means of this
formal group law. Call $V$ the $\F_{\ell}$-vector space of
$\ell$-torsion points of this group (which is isomorphic to the
group of $\ell$-torsion points of $E$ as
$\Gal(\overline{\Q}_{\ell}/\Q_{\ell})$-module). One essential
ingredient in the proof is  the fact that the $\ell$-adic valuation
of the points of $V$ can be explicitly computed (see Proposition 9,
$\S$ 1.9 of \cite{Proprietesgaloisiennes}). This fact allows one to
define an embedding of  $V$  into a certain 1-dimensional
$\overline{\F}_{\ell}$-vector space (called $V_{\alpha}$ in
\cite{Proprietesgaloisiennes}) where the wild inertia group acts
trivially, and in turn this compels the wild inertia group to act
trivially upon $V$. When the dimension $n$ of the formal group law
is greater than 1 the situation becomes more complicated. It is no
longer possible to compute the $\ell$-adic valuation of the $n$
coordinates of the elements of $V$, which now denotes the group of
$\ell$-torsion points of the corresponding formal group. In this
paper we give a condition, Hypothesis \ref{H}, under which we can
prove that the wild inertia group acts trivially on $V$. The key
point is that this hypothesis allows us to define several different
embeddings of $V$ into $V_{\alpha}$.

In the rest of the paper we apply this result to the case of
dimension $2$, and produce non-trivial examples of abelian surfaces
defined over $\Q_{\ell}$ such that the ramification of
$\Q_{\ell}(A[\ell])/\Q_{\ell}$ is tame. We introduce the notion of
\emph{symmetric} $2$-dimensional formal group law, and prove that
such a formal group law satisfies Hypothesis \ref{H} under a certain
condition. Furthermore, using this result we explicitly construct,
for each $\ell\geq 5$, genus $2$ curves over $\Q_{\ell}$ such that
the formal group law attached to their Jacobians satisfy Hypothesis
\ref{H} (cf. Theorem \ref{PrimeraFamilia}). Finally we formulate a
condition that allows us to deform the curves and enlarge the family
of genus $2$ curves such that the Galois extension defined by the
$\ell$-torsion points of their Jacobians is tamely ramified, which
enables us to obtain Theorem \ref{MainResult}.

Given a prime $\ell$, in \cite{Ariasdereyna-Vila2009} the authors
contruct certain semistable elliptic curves defined over $\Q$ with
good supersingular reduction at $\ell$. When $\ell\geq 11$, these
curves provide tame Galois realizations of the group
$\GL_2(\F_{\ell})$. In this way, the authors give an affirmative
answer to the tame inverse Galois problem posed by B. Birch in
\cite{Birch}, Section 2, for the family of linear groups
$\GL_2(\F_{\ell})$.

In \cite{Paper2}, we will use the results in this paper in order to
realize the groups in the family $\GSp_4(\F_{\ell})$ as the Galois
group of a tamely ramified extension for each prime $\ell\geq 5$.

The contents of this paper are part of my Ph.D. thesis. I want to
thank my advisor, Prof. N\'uria Vila, for her constant support and
helpful conversations.

\section{Notation}

We will denote by $K$  a local field of characteristic zero and
residual characteristic $\ell$,  $v$ the corresponding discrete
valuation, normalized so that $v(K^*)=\Z$, $\Oh$ the ring of
integers of the valuation and $k$ the residue field. Further, we
will assume that $v(\ell)=1$ (that is to say,  $K$ will be an
unramified extension of $\Q_{\ell}$). We fix an algebraic closure
$\overline{K}$ of $K$, and denote by $v$ the extension of $v$ to
this algebraic closure. Finally, $\overline{k}$ denotes the
algebraic closure of $k$ obtained through the reduction of
$\Oh_{\overline{K}}$, the ring of integers of $\overline{K}$ with
respect to $v$, modulo its maximal ideal. Later in the paper, we
will take $K=\Q_{\ell}$.

We will denote by $I\subset \Gal(\overline{K}/K)$ the inertia group,
and by $I_{\wild}$ the wild inertia group.

To ease notation, we will denote the tuples of elements in boldface.
For instance, we will write $\mathbf{X}=(X_1, \dots, X_n)$,
$\mathbf{Y}=(Y_1, \dots, Y_n), \mathbf{Z}=(Z_1, \dots, Z_n)$ to
denote $n$-tuples of variables, and $\mathbf{x}=(x_1, \dots, x_n)$,
$\mathbf{y}=(y_1, \dots, y_n)$ will denote tuples of elements of
$\overline{K}$.

\section{Inertia action and the formal group law}\label{Inertia action
and the formal group}

We will start by recalling that an \emph{$n$-dimensional formal
group law} defined over $\Oh$ is a $n$-tuple of power series
\begin{equation*}(F_1(\mathbf{X}, \mathbf{Y}), \dots,
F_n(\mathbf{X}, \mathbf{Y}))\in \Oh[[X_1, \dots, X_n, Y_1, \dots,
Y_n]]^{\times n}\end{equation*} satisfying:

\begin{itemize}

\item $F_i(\mathbf{X}, \mathbf{Y})\equiv X_i + Y_i
\pmod{\textrm{terms of degree two}}$,\\ for all $i=1, \dots, n$.

\item $F_i(F_1(\mathbf{X}, \mathbf{Y}), \dots, F_n(\mathbf{X},
\mathbf{Y}), \mathbf{Z})=F_i(\mathbf{X}, F_1(\mathbf{Y},
\mathbf{Z}), \dots, F_n(\mathbf{Y}, \mathbf{Z}))$ \\ for all $i=1,
\dots, n$.

\end{itemize}

Besides, if $F_i(\mathbf{X}, \mathbf{Y})=F_i(\mathbf{Y},\mathbf{X})$
for all $i=1, \dots, n$, then the formal group law is said to be
\emph{commutative}.

To a formal power series one can attach a group. Let us denote by
$\overline{\fm}$  the set of elements of $\overline{K}$ with
positive valuation, and denote by $\overline{\fm}^{\times n}$ the
cartesian product of $\overline{\fm}$ with itself $n$ times. For
this set one can define an addition law $\oplus_F$ by
\begin{align*}
\oplus_{\mathbf{F}}:\overline{\fm}^{\times n}\times
\overline{\fm}^{\times n} &\rightarrow
\overline{\fm}^{\times n}\\
(\mathbf{x}, \mathbf{y})& \mapsto (F_1(\mathbf{x}, \mathbf{y}),
\dots, F_n(\mathbf{x}, \mathbf{y}))
\end{align*} (which is well defined since $F_i(\mathbf{x}, \mathbf{y})$
converges to an element of $\overline{\fm}$, for all $i=1, \dots
n$). The set $\overline{\fm}^{\times n}$, endowed with this sum,
turns out to be a group, which will be denoted by
$\mathbf{F}(\overline{\fm})$. Let us call $V$ the $\F_{\ell}$-vector
space of $\ell$-torsion points of $\mathbf{F}(\overline{\fm})$.

In \cite{Proprietesgaloisiennes}, $\S$ 8, an auxiliary object is
introduced.

\begin{defn}\label{Valpha} Let $\alpha\in \Q$ be a positive rational number. Consider the sets
\begin{equation*}\overline{\fm}_{\alpha}=\{x\in \overline{\fm}: v(x)\geq
\alpha\}\quad\textrm{ and }\quad \overline{\fm}_{\alpha}^+=\{x\in
\overline{\fm}: v(x)>\alpha\}.\end{equation*}

We define $V_{\alpha}$ as the quotient group
\begin{equation*}V_{\alpha}:=
\overline{\fm}_{\alpha}/\overline{\fm}^+_{\alpha}.\end{equation*}

\end{defn}

$V_{\alpha}$ has a natural structure of $\overline{k}$-vector space,
and its dimension as such is $1$. Moreover, the absolute Galois
group of $K$ acts on $V_{\alpha}$: for each $\sigma\in
\Gal(\overline{K}/ K)$, and for each $x +
\overline{\fm}^+_{\alpha}\in
\overline{\fm}_{\alpha}/\overline{\fm}^+_{\alpha}$, we have
$\sigma(x + \overline{\fm}^+_{\alpha}):=\sigma(x) +
\overline{\fm}^+_{\alpha}$. In general, this action does not respect
the $\overline{k}$-vector space structure. But if we take an element
$\sigma$ in the inertia group $I$, it induces a morphism of
$\overline{k}$-vector space on $V_{\alpha}$, and in turn this
implies that the wild inertia group $I_{\wild}$ acts trivially on
$V_{\alpha}$ (cf. $\S$ 1.8 in \cite{Proprietesgaloisiennes}). The
main point in the proof, in dimension $1$, that the wild inertia
group acts trivially on $V$ is to define an embedding of $V$ into
$V_{\alpha}$, taking advantage of the fact that the valuation of the
points of $V$ is equal to $\alpha=\frac{1}{\ell^2-1}$.

But, in the case when $n>1$, each point has $n$ coordinates, and we
have to admit the possibility that the valuations of the coordinates
of the $\ell$-torsion points of $\mathbf{F}(\overline{\fm})$ have
different values. Our idea is to formulate a weaker assumption about
the valuations of the coordinates, but which is strong enough to
imply the desired result about the action of the wild inertia group
$I_{\wild}$ on $\mathbf{F}(\overline{\fm})$.

\begin{hypo}\label{H}
There exists a positive $\alpha\in \Q$ such that, for all non-zero
$(x_1, \dots, x_n)\in V$, it holds that
\begin{equation*}\min_{1\leq i\leq n}\{v(x_i)\}=\alpha.\end{equation*}
\end{hypo}

Under this hypothesis, we are able to prove the desired result:

\begin{thm}\label{resultado general}
Let $\mathbf{F}$ be a formal group law such that the
$\F_{\ell}$-vector space $V$ of the $\ell$-torsion points of
$\mathbf{F}(\overline{\fm})$ satisfies Hypothesis \ref{H}. Then the
image of the wild inertia group $I_{\wild}$ by the Galois
representation attached to $V$ is trivial.
\end{thm}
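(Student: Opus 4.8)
The plan is to mimic the dimension-$1$ argument from \cite{Proprietesgaloisiennes}, $\S$ 1, but to replace the single embedding $V \hookrightarrow V_\alpha$ by a family of embeddings, one for each coordinate index where the minimal valuation $\alpha$ is attained. First I would fix the rational number $\alpha$ provided by Hypothesis \ref{H}. For each $i \in \{1, \dots, n\}$, consider the map $\pi_i \colon V \to V_\alpha$ sending a nonzero point $\mathbf{x} = (x_1, \dots, x_n)$ to the class of $x_i$ in $\overline{\fm}_\alpha / \overline{\fm}_\alpha^+$ (and $0 \mapsto 0$); note this makes sense precisely because $v(x_i) \geq \alpha$ for every $i$, by Hypothesis \ref{H}. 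The first point to check is that each $\pi_i$ is $\Gal(\overline{K}/K)$-equivariant: this is immediate, since $\sigma$ acts coordinatewise on $V$ and the Galois action on $V_\alpha$ is induced by the action on $\overline{\fm}_\alpha$. The second, and more substantive, point is that $\pi_i$ is additive, i.e. a morphism of $\F_\ell$-modules: here one uses that $F_j(\mathbf{X}, \mathbf{Y}) \equiv X_j + Y_j$ modulo terms of degree $\geq 2$, so that for $\mathbf{x}, \mathbf{y} \in V$ the $i$-th coordinate of $\mathbf{x} \oplus_{\mathbf{F}} \mathbf{y}$ equals $x_i + y_i$ plus a sum of monomials of degree $\geq 2$ in the $x_j, y_j$; each such monomial has valuation $\geq 2\alpha > \alpha$ (since all coordinates have valuation $\geq \alpha > 0$), hence lies in $\overline{\fm}_\alpha^+$ and dies in $V_\alpha$. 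This shows $\pi_i(\mathbf{x} \oplus_{\mathbf{F}} \mathbf{y}) = \pi_i(\mathbf{x}) + \pi_i(\mathbf{y})$.

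Next I would observe that the $\pi_i$ jointly separate points: if $\mathbf{x} \in V$ is nonzero, then $\min_i v(x_i) = \alpha$ by Hypothesis \ref{H}, so there is some index $i$ with $v(x_i) = \alpha$, i.e. $x_i \notin \overline{\fm}_\alpha^+$, whence $\pi_i(\mathbf{x}) \neq 0$. Therefore the map $\pi = (\pi_1, \dots, \pi_n) \colon V \to V_\alpha^{\oplus n}$ is an injective, Galois-equivariant homomorphism of $\F_\ell$-vector spaces. Now take $\sigma \in I_{\wild}$. Recall from the discussion preceding Hypothesis \ref{H} (cf. $\S$ 1.8 of \cite{Proprietesgaloisiennes}) that $I_{\wild}$ acts trivially on $V_\alpha$, hence trivially on $V_\alpha^{\oplus n}$. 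For any $\mathbf{x} \in V$ we then have $\pi(\sigma \mathbf{x}) = \sigma \pi(\mathbf{x}) = \pi(\mathbf{x})$, and since $\pi$ is injective we conclude $\sigma \mathbf{x} = \mathbf{x}$. Thus $I_{\wild}$ acts trivially on $V$, which is exactly the assertion that its image under the Galois representation on $V$ is trivial.

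The only genuinely delicate step is the additivity of $\pi_i$, and within it the valuation estimate on the higher-degree terms of $F_i$: one must be slightly careful that the power series $F_i(\mathbf{x}, \mathbf{y})$ converges and that its ``tail'' beyond the linear part indeed lands in $\overline{\fm}_\alpha^+$. Since all the relevant coordinates lie in $\overline{\fm}$, each has strictly positive valuation, and any monomial of degree $d \geq 2$ in these coordinates has valuation $\geq d\alpha$ once we know every coordinate has valuation $\geq \alpha$; as $d\alpha \geq 2\alpha > \alpha$ this forces the tail into $\overline{\fm}_\alpha^+$, and convergence of the series guarantees we may sum these contributions without leaving $\overline{\fm}_\alpha^+$. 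Everything else — equivariance, injectivity, and the transfer of triviality from $V_\alpha$ to $V$ — is formal. I would remark that Hypothesis \ref{H} is used in exactly two places: to guarantee $v(x_i) \geq \alpha$ for all $i$ (so that $\pi_i$ is defined), and to guarantee that for nonzero $\mathbf{x}$ the value $\alpha$ is actually attained by some coordinate (so that $\pi$ is injective).
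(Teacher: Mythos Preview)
Your proof is correct and follows essentially the same route as the paper's. The paper introduces a larger family of maps $\varphi_{(\lambda_1,\dots,\lambda_n)}\colon V\to V_\alpha$, $(y_1,\dots,y_n)\mapsto \lambda_1 y_1+\cdots+\lambda_n y_n+\overline{\fm}_\alpha^+$ indexed by all tuples in $\Z^n$, but in the end only uses the coordinate projections (your $\pi_i$) to show that $\bigcap\ker\varphi_{(\lambda_1,\dots,\lambda_n)}=0$; your presentation simply bypasses the redundant maps and also spells out the additivity argument (via the valuation estimate $\geq 2\alpha>\alpha$ on the nonlinear part of $F_i$) that the paper leaves implicit.
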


\begin{proof}
Let $P=(x_1, \dots, x_n)\in V$. We are going to show that each
$\sigma \in I_{\wild}$ acts trivially on $P$, that is,
$\sigma(P)=P$.

According to Hypothesis \ref{H}, we have that, for each non-zero
point $Q=(y_1, \dots, y_n)\in V$,
\begin{equation*}\min_{1\leq i\leq n}\{v(y_i)\}=\alpha.\end{equation*}
Therefore, for each $n$-tuple $(\lambda_1, \dots, \lambda_n)\in
\Z^n$, we know that either $\lambda_1 y_1 + \cdots \lambda_n y_n=0$
or else it belongs to $\overline{\fm}_{\alpha}$. This allows us to
consider the following map:
\begin{equation*}\begin{aligned}
\varphi_{(\lambda_1, \dots, \lambda_n)}: V & \rightarrow
V_{\alpha}=\overline{\fm}_{\alpha}/
\overline{\fm}_{\alpha^+}\\
(y_1, \dots, y_n)& \mapsto \lambda_1y_1 + \cdots \lambda_n y_n +
\overline{\fm}_{\alpha}^+.\end{aligned}\end{equation*}

It is clear that $\varphi_{(\lambda_1, \dots, \lambda_n)}$ is a
group morphism, when we consider on $V$ the sum given by the formal
group law, and on $V_{\alpha}$ the sum induced by that of
$\overline{K}$. As a matter of fact, it is a morphism of
$\F_{\ell}$-vector spaces (for the structure of $\F_{\ell}$-vector
space is determined by the sum). Besides, it is compatible with the
Galois action.

Now let us take an element $\sigma\in I_{\wild}$. Then
\begin{equation*}\varphi_{(\lambda_1, \dots,
\lambda_n)}(\sigma(P))=\sigma(\varphi_{(\lambda_1, \dots,
\lambda_n)}(P))=\varphi_{(\lambda_1, \dots,
\lambda_n)}(P),\end{equation*} where the last equation holds because
$I_{\wild}$ acts trivially upon $V_{\alpha}$. In other words, for
each $n$-tuple $(\lambda_1, \dots, \lambda_n)\in \Z^n$,
$\sigma(P)-P$ belongs to the kernel of $\varphi_{(\lambda_1, \dots,
\lambda_n)}$. But no point of $V$ can belong to all these kernels
save the zero vector. This, again, is a consequence of Hypothesis
\ref{H}. Any non-zero point $Q=(y_1, \dots, y_n)\in V$ satisfies
that there exists $j\in \{1, \dots, n\}$ such that $v(y_j)=\alpha$.
If we take $\lambda_i=0$ for all $i\not=j$, $\lambda_j=1$, then
$\varphi_{(\lambda_1, \dots, \lambda_n)}(P)=x_j +
\overline{\fm}_{\alpha}^+\not= 0 + \overline{\fm}_{\alpha}^+$.

To sum up, for each $P\in V$ and each $\sigma\in I_{\wild}$,
$\sigma(P)-P=(0, \dots, 0)$, and so $\sigma$ acts trivially on $P$.

\end{proof}

\section{Symmetric formal group laws of dim 2}\label{Symmetric formal group laws}

Let $\mathbf{F}$ be a formal group law over $\Q_{\ell}$ of dimension
$2$. Our aim is to analyze the valuation of the $\ell$-torsion
points of $\mathbf{F}(\overline{\mathfrak{m}})$, and try to obtain
explicit conditions that ensure that Hypothesis \ref{H} holds. The
property of being $\ell$-torsion provides us with two equations in
two variables. Let us briefly recall these equations. We begin by
recalling the definition of homomorphism between formal group laws
of dimension $n$.

\begin{defn} Let $\mathbf{F}=(F_1(\mathbf{X}, \mathbf{Y}), \dots, F_n(\mathbf{X},
\mathbf{Y}))$ and \newline $\mathbf{G}=(G_1(\mathbf{X}, \mathbf{Y}),
\dots G_n(\mathbf{X}, \mathbf{Y}))$ be two formal group laws over
$\Oh$ of dimension $n$. A  \emph{homomorphism} $f$ is a $n$-tuple of
formal power series in $\Oh[[Z_1, \dots, Z_n]]$ without constant
term, say $(f_1(\mathbf{Z}), \dots,$ $f_n(\mathbf{Z}))$, such that
\begin{multline*}f(F_1(\mathbf{X}, \mathbf{Y}),\dots,  F_n(\mathbf{X},
\mathbf{Y}))=\\ =(G_1(f_1(\mathbf{X}), \dots, f_n(\mathbf{X}),
f_1(\mathbf{Y}), \dots,  f_n(\mathbf{Y})),\\ \dots,
G_n(f_1(\mathbf{X}), \dots, f_n(\mathbf{X}), f_1(\mathbf{Y}), \dots,
f_n(\mathbf{Y}))).\end{multline*}
\end{defn}

\begin{exmp}\label{multiplication by m map} For each $m\in \N$,
one can define the multiplication by $m$ map in the following way:
\begin{equation*}\begin{cases}[0](\mathbf{Z})=(0, 0, \dots, 0) \\
             [1](\mathbf{Z})=\mathbf{Z}\\
             [m+1](\mathbf{Z})=\mathbf{F}([1](\mathbf{Z}),
             [m](\mathbf{Z}))\text{ for } m\geq 1.
             \end{cases}\end{equation*}

It is easy to prove by induction that the shape of the $n$ power
series $[m]_i(\mathbf{Z})$ that constitute the multiplication by $m$
map is the following:
\begin{equation*}[m]_i(\mathbf{Z})=m\cdot Z_i +
\text{ terms of degree }\geq 2, \end{equation*} for all $i=1, \dots,
n$.
\end{exmp}

When $n=2$, the multiplication by $\ell$ map is defined by two
equations in two variables, and this complicates our attempt to
compute the valuations of the two coordinates of the points of $V$.
In order to avoid this inconvenience, we are going to restrict our
attention to a special kind of formal group laws. Namely, we will
consider formal group laws such that the two equations have a
certain relationship that allows us to reduce the problem to
studying a single equation.

\begin{defn}\label{def grupo formal symm} Let
$\mathbf{F}=(F_1(X_1, X_2, Y_1, Y_2), F_2(X_1, X_2, Y_1, Y_2))$ be a
formal group law of dimension $2$ over $\Q_{\ell}$. We will say that
$\mathbf{F}$ is a \emph{symmetric formal group law} if the following
relationship holds:
\begin{equation*}F_2(X_2, X_1, Y_2, Y_1)=F_1(X_1, X_2, Y_1, Y_2).\end{equation*}

\end{defn}

The symmetry is reflected in the power series $[\ell]_1(Z_1, Z_2)$
and $[\ell]_2(Z_1, Z_2)$. By induction on $m$, one can prove the
following lemma.

\begin{lem}\label{simetria en n} Let $\mathbf{F}(\mathbf{X}, \mathbf{Y})$
be a symmetric formal group law of dimension $2$. For all $m\geq 1$,
it holds that
\begin{equation*}[m]_2(Z_2, Z_1)=[m]_1(Z_1, Z_2).\end{equation*}
\end{lem}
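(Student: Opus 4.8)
The plan is to prove the identity by induction on $m$, exploiting the defining symmetry $F_2(X_2,X_1,Y_2,Y_1)=F_1(X_1,X_2,Y_1,Y_2)$ of the formal group law together with the recursive definition of the multiplication-by-$m$ map from Example \ref{multiplication by m map}. The base case $m=1$ is immediate, since $[1]_i(\mathbf{Z})=Z_i$, so $[1]_2(Z_2,Z_1)=Z_1=[1]_1(Z_1,Z_2)$.

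For the inductive step, assume the identity $[m]_2(Z_2,Z_1)=[m]_1(Z_1,Z_2)$ for some $m\geq 1$; I would in fact carry along the companion identity $[m]_1(Z_2,Z_1)=[m]_2(Z_1,Z_2)$ as part of the inductive hypothesis, since swapping the roles of the two variables in the first identity gives it for free. The recursion gives
\begin{equation*}
[m+1]_2(Z_2,Z_1)=F_2\bigl([1]_1(Z_2,Z_1),[1]_2(Z_2,Z_1),[m]_1(Z_2,Z_1),[m]_2(Z_2,Z_1)\bigr).
\end{equation*}
Here the four arguments of $F_2$ are, respectively, $Z_2$, $Z_1$, $[m]_1(Z_2,Z_1)$ and $[m]_2(Z_2,Z_1)$. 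Writing $A_1:=[1]_1(Z_1,Z_2)=Z_1$, $A_2:=[1]_2(Z_1,Z_2)=Z_2$, $B_1:=[m]_1(Z_1,Z_2)$, $B_2:=[m]_2(Z_1,Z_2)$, the inductive hypothesis identifies the four arguments above with $A_2$, $A_1$, $B_2$, $B_1$ in that order. Now apply the symmetry relation of Definition \ref{def grupo formal symm}, namely $F_2(u_2,u_1,w_2,w_1)=F_1(u_1,u_2,w_1,w_2)$, with $u_1=A_1$, $u_2=A_2$, $w_1=B_1$, $w_2=B_2$; this yields
\begin{equation*}
[m+1]_2(Z_2,Z_1)=F_1\bigl(A_1,A_2,B_1,B_2\bigr)=F_1\bigl([1]_1(Z_1,Z_2),[1]_2(Z_1,Z_2),[m]_1(Z_1,Z_2),[m]_2(Z_1,Z_2)\bigr),
\end{equation*}
which is exactly $[m+1]_1(Z_1,Z_2)$ by the recursion, completing the induction.

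The only genuinely delicate point is bookkeeping: one must be careful to track precisely which variable sits in which slot of $F_1$ and $F_2$ after the substitution $\mathbf{Z}\mapsto(Z_2,Z_1)$, and to recognize that the symmetry relation is an identity of power series that may be applied after substituting arbitrary power series (here $A_1,A_2,B_1,B_2$) for the formal variables. I expect no obstacle beyond this notational care; the argument is a direct unwinding of the two recursions matched up through the single symmetry identity, and the companion statement about $[m]_1$ is handled symmetrically (or simply by renaming $Z_1\leftrightarrow Z_2$ throughout).
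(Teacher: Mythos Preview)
Your proof is correct and follows exactly the approach indicated in the paper, namely a straightforward induction on $m$ using the recursive definition of $[m]$ together with the symmetry relation of Definition~\ref{def grupo formal symm}. The paper merely states that the lemma follows by induction on $m$ without giving details, so your write-up is simply a faithful expansion of that argument.
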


Next we will establish two technical lemmas which will be useful.

\begin{lem}\label{lema tecnico 1} Let $\ell>2$ be a prime number,
$r\in \N$, and let $f(Z_1, Z_2)\in \Z_{\ell}[[Z_1, Z_2]]$ be a
formal power series such that $f(Z_2, Z_1)=-f(Z_1, Z_2)$, which can
be written as:
\begin{multline*}f(Z_1, Z_2)=  \ell\cdot (Z_1 - Z_2) +
\ell\cdot(\text{terms of total degree $\geq 2$ and $< \ell^r$})
\\ \hskip 2.3cm  + a\cdot (Z_1^{\ell^r}-Z_2^{\ell^r})+ \text{ terms of total degree
$\geq\ell^r + 1$},\end{multline*} where $\ell\nmid a$. Then if
$(x_0, y_0)\in \overline{\fm}\times \overline{\fm}$ with
$x_0\not=y_0$ satisfies $f(x_0, y_0)=0$ and furthermore $v(x_0),
v(y_0)\geq v(x_0-y_0)$, then the $\ell$-adic valuation $v(x_0 -
y_0)$ is $1/(\ell^r-1)$.
\end{lem}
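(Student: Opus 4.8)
The plan is to substitute the hypothesis $f(x_0,y_0)=0$ directly and analyze the Newton polygon of the resulting equation in the single quantity $w = x_0 - y_0$. First I would set $\beta = v(x_0 - y_0)$ and, writing $u = v(x_0)$, $v = v(y_0)$, record the assumption $u, v \geq \beta$. The three groups of terms in $f$ have valuations that can be estimated: the linear part $\ell(Z_1 - Z_2)$ contributes $1 + \beta$ at $(x_0,y_0)$ (exactly, since $v(\ell)=1$ and $v(x_0-y_0)=\beta$); the intermediate terms, each of the form $\ell \cdot (\text{monomial of degree } d$ with $2 \le d < \ell^r)$, have valuation at least $1 + \beta$ as well, because $f(Z_2,Z_1) = -f(Z_1,Z_2)$ forces every such monomial to appear in an antisymmetric combination divisible by $Z_1 - Z_2$, so each intermediate chunk is $\ell \cdot (Z_1 - Z_2) \cdot (\text{something in } \Z_\ell[[Z_1,Z_2]])$ and hence has valuation $\ge 1 + \beta$; the term $a(Z_1^{\ell^r} - Z_2^{\ell^r})$ contributes valuation $\ell^r \beta$ (using $\ell \nmid a$ and again antisymmetry, $Z_1^{\ell^r} - Z_2^{\ell^r}$ is divisible by $Z_1 - Z_2$, with the cofactor $Z_1^{\ell^r - 1} + \cdots + Z_2^{\ell^r-1}$ having valuation $\ge (\ell^r-1)\beta$, and in fact equal when $u = v = \beta$ or more carefully by a congruence argument); the tail terms of degree $\ge \ell^r + 1$ have valuation $\ge (\ell^r+1)\beta$.

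The key step is then to observe that for $f(x_0,y_0) = 0$ to hold, the two smallest valuations among these contributions must coincide (otherwise the term of strictly smallest valuation could not be cancelled). The only genuinely competing valuations are $1 + \beta$ (from the linear-plus-intermediate block) and $\ell^r \beta$ (from the $a$-term), since $(\ell^r + 1)\beta > \ell^r \beta$ always and the tail is dominated. Setting $1 + \beta = \ell^r \beta$ gives $\beta(\ell^r - 1) = 1$, i.e. $\beta = 1/(\ell^r - 1)$, which is the claim. To make this rigorous I would factor $f(Z_1,Z_2) = (Z_1 - Z_2) \cdot g(Z_1,Z_2)$ with $g \in \Z_\ell[[Z_1,Z_2]]$ (legitimate since $f$ is antisymmetric and $\ell > 2$, so division by $Z_1 - Z_2$ stays integral — one checks this on monomials, $Z_1^i Z_2^j - Z_1^j Z_2^i$ being divisible by $Z_1 - Z_2$ in $\Z[Z_1,Z_2]$), read off that $g(Z_1,Z_2) \equiv \ell + a(Z_1^{\ell^r-1} + Z_1^{\ell^r-2}Z_2 + \cdots + Z_2^{\ell^r-1}) \pmod{\text{appropriate ideal}}$, and then since $x_0 \ne y_0$ we need $g(x_0,y_0) = 0$. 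Evaluating $v(g(x_0,y_0))$: the constant term $\ell$ has valuation $1$; the degree-$(\ell^r-1)$ symmetric block has valuation exactly $(\ell^r-1)\beta$ — here one must argue the $\ell^r$ terms $x_0^i y_0^{\ell^r-1-i}$ do not cancel, which holds because they are all congruent mod higher valuation to $\beta$-level quantities with the same leading behaviour (using $u,v \ge \beta$ and that there are $\ell^r$ of them, $\ell \nmid \ell^r$ is false — so this needs care; instead use $\ell \nmid a$ and a direct Newton-polygon argument on $g$ rather than term-counting); the remaining terms of $g$ have strictly larger valuation. For $g(x_0,y_0)=0$ the minimum valuation must be attained twice, forcing $1 = (\ell^r-1)\beta$.

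The main obstacle I anticipate is justifying that $v\bigl(a \cdot (x_0^{\ell^r-1} + \cdots + y_0^{\ell^r-1})\bigr) = (\ell^r - 1)\beta$ exactly, rather than merely $\ge (\ell^r-1)\beta$: a priori there could be cancellation among the $\ell^r$ summands. The cleanest fix is to reduce modulo $\overline{\fm}_{(\ell^r-1)\beta}^+$, i.e. pass to the $\overline{k}$-vector space $V_{(\ell^r-1)\beta}$ from Definition \ref{Valpha}: write $x_0 = t \bar x$, $y_0 = t \bar y$ formally with $v(t) = \beta$ (after possibly enlarging by a uniformizer), so that $\bar x, \bar y$ have nonnegative valuation and at least one is a unit; then $x_0^i y_0^{\ell^r-1-i}$ has valuation $\ge (\ell^r-1)\beta$ with equality iff both $\bar x, \bar y$ are units, and the sum of the $\ell^r$ leading terms is $a \bar x^{\ell^r-1}(1 + (\bar y/\bar x) + \cdots) = a\,\frac{\bar x^{\ell^r} - \bar y^{\ell^r}}{\bar x - \bar y}$ when $\bar x \ne \bar y$; since $x_0 \ne y_0$ we have $\bar x \ne \bar y$, and in residue characteristic $\ell$ the polynomial $T^{\ell^r} - 1$ has $T=1$ as its only root counted without multiplicity issues relevant here — more precisely $\frac{\bar x^{\ell^r} - \bar y^{\ell^r}}{\bar x - \bar y}$ reduces to $(\bar x - \bar y)^{\ell^r - 1} \not\equiv 0$ in $\overline{k}$ when $\bar x \not\equiv \bar y$, using Frobenius. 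This gives exact valuation $(\ell^r-1)\beta$ and closes the argument. If instead $\bar x \equiv \bar y$ in $\overline{k}$ while still $v(x_0-y_0) = \beta$, one has to be slightly more delicate, but then $v(x_0 - y_0) > \beta$ would follow unless $\beta$ is not achieved at the "leading" level — this edge case I would handle by noting the hypothesis already pins $v(x_0-y_0) = \beta$ and re-examining which block of $f$ dominates, ultimately still forcing $1 + \beta = \ell^r\beta$.
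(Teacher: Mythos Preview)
Your approach is correct but takes a more circuitous route than the paper's. The paper never factors out $Z_1 - Z_2$; instead it works directly with $f$ and handles the critical term $a(x_0^{\ell^r}-y_0^{\ell^r})$ by the binomial identity
\[
a\bigl(x_0^{\ell^r}-y_0^{\ell^r}\bigr)=a(x_0-y_0)^{\ell^r}-aB,
\qquad B=\sum_{i=1}^{\ell^r-1}(-1)^{i+1}\binom{\ell^r}{i}x_0^{\ell^r-i}y_0^{i}.
\]
Since every $\binom{\ell^r}{i}$ for $1\le i\le \ell^r-1$ is divisible by $\ell$, one gets $v(aB)\ge 1+\ell^r\beta>1+\beta$, while $v\bigl(a(x_0-y_0)^{\ell^r}\bigr)=\ell^r\beta$ exactly. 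This isolates the two competing valuations $1+\beta$ and $\ell^r\beta$ immediately, with no need to analyze the symmetric sum $x_0^{\ell^r-1}+\cdots+y_0^{\ell^r-1}$ at all. For the intermediate block the paper simply uses that each monomial has total degree $\ge 2$, so $v(\ell\cdot c\,x_0^n y_0^m)\ge 1+(n+m)\beta>1+\beta$; antisymmetry is not needed here either.

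Your factorisation $f=(Z_1-Z_2)g$ also works, and the Frobenius reduction you describe is essentially the same Freshman's-dream identity applied one step later. The edge case you worry about at the end does not in fact arise: if $t$ has $v(t)=\beta$ and you write $x_0=t\bar x$, $y_0=t\bar y$ with $\bar x,\bar y\in\mathcal{O}_{\overline K}$, then $v(x_0-y_0)=\beta$ forces $v(\bar x-\bar y)=0$, so $\bar x\not\equiv\bar y$ in $\overline k$ automatically. With that observation your argument closes cleanly; the paper's version just reaches the same balance $1+\beta=\ell^r\beta$ with fewer moving parts.
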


\begin{proof} Let us call $\beta=v(x_0-y_0)$. We will compute the
valuations of the different terms that appear in the equality
$f(x_0, y_0)=0$.

\begin{itemize}

\item $v(\ell\cdot(x_0-y_0))= 1 + \beta$.

\item Let us consider a term of total degree between $2$ and
$\ell^r-1$, say $\ell\cdot c x_0^{n} y_0^m$. Compute its valuation:
$v(\ell\cdot c x_0^n y_0^m)= 1 + v(c) + nv(x_0) + mv(y_0)\geq 1 + (n
+ m)\beta > 1 + \beta$, since $n+m\geq 2$.

\item Let us consider the term $a (x_0^{\ell^r} - y_0^{\ell^r})$.
Let us split it into the sum of two terms, in the following way:
\begin{equation*} a\cdot(x_0^{\ell^r}- y_0^{\ell^r})= a\cdot((x_0- y_0)^{\ell^r} -
B)=a\cdot(x_0- y_0)^{\ell^r} - a\cdot B,\end{equation*} where
$B=(x_0-y_0)^{\ell^r}- (x_0^{\ell^r}- y_0^{\ell^r})$.

On the one hand, $v(a\cdot(x_0- y_0)^{\ell^r})=v(a) +
\ell^r\beta=\ell^r\beta$, since $\ell$ does not divide $a$.

On the other hand, note that \begin{multline*}(x_0-y_0)^{\ell^r}=
x_0^{\ell^r} - \binom{\ell^r}{1}x_0^{\ell^r-1}y_0 +
\binom{\ell^r}{2}x_0^{\ell^r-2}y_0^2 + \cdots \\ -
\binom{\ell^r}{2}x_0^2 y_0^{\ell^r-2}+ \binom{\ell^r}{1}x_0
y_0^{\ell^r-1} - y_0^{\ell^r}.\end{multline*}

Therefore,  each of the terms
$\binom{\ell^r}{i}(-1)^ix_0^{\ell^r-i}y_0^i$ has a valuation
strictly greater than $1+\beta$. (For $v(x_0^{\ell^r-i}y_0^i)\geq
\beta(\ell^r-i+i)=\ell^r \beta$, and hence
$v(\binom{\ell^r}{i}(-1)^ix_0^{\ell^r-i}y_0^i)\geq 1 + \beta\ell^r>
1+\beta$).

\item Since $v(x_0), v(y_0)\geq \beta$, it is clear that the
valuation of the terms of degree greater than $\ell^r$ is greater
than $\ell^r\beta$.

\end{itemize}

But obviously there must be (at least) two terms with minimal
valuation, since they must cancel out. Therefore $v(\ell\cdot
(x_0-y_0))=v(a\cdot (x_0-y_0)^{\ell^r})$, that is to say, $1 +
\beta=\ell^r\beta$, hence $\beta=1/(\ell^r-1)$, as was to be proven.

\end{proof}

\begin{lem}\label{lema tecnico 2} Let $\ell>2$ be a prime number,
$r\in \N$, and let $f(Z_1, Z_2)\in \Z_{\ell}[[Z_1, Z_2]]$ be a
formal power series such that $f(Z_2, Z_1)=f(Z_1, Z_2)$, which can
be written as:
\begin{multline*}f(Z_1, Z_2)=  \ell\cdot (Z_1 + Z_2) +
\ell\cdot(\text{terms of total degree $\geq 2$ and $< \ell^r$})\\
+ a\cdot (Z_1^{\ell^r}+Z_2^{\ell^r}) + \text{ terms of total degree
$\geq\ell^r + 1$},\end{multline*} where $\ell\nmid a$. Then if
$(x_0, y_0)\in \overline{\fm}\times \overline{\fm}$ satisfies
$f(x_0, y_0)=0$ and furthermore $v(x_0), v(y_0)\geq v(x_0 + y_0)$,
then $v(x_0 + y_0)$ is $1/(\ell^r-1)$.
\end{lem}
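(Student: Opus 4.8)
The plan is to mimic the proof of Lemma \ref{lema tecnico 1}, replacing the antisymmetric relation $f(Z_2,Z_1)=-f(Z_1,Z_2)$ with the symmetric one and correspondingly $x_0-y_0$ with $x_0+y_0$ throughout. Write $\beta=v(x_0+y_0)$; the hypothesis $v(x_0),v(y_0)\geq\beta$ plays the same role as before. The goal is to identify the terms of minimal valuation in the equation $f(x_0,y_0)=0$ and show that there are exactly two of them, namely $\ell\cdot(x_0+y_0)$ with valuation $1+\beta$ and $a\cdot(x_0+y_0)^{\ell^r}$ with valuation $\ell^r\beta$, forcing $1+\beta=\ell^r\beta$ and hence $\beta=1/(\ell^r-1)$.

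First I would record the valuations of the individual pieces. The linear term $\ell\cdot(x_0+y_0)$ has valuation $1+\beta$. A term $\ell\cdot c\,x_0^n y_0^m$ of total degree $n+m$ with $2\leq n+m<\ell^r$ has valuation $1+v(c)+nv(x_0)+mv(y_0)\geq 1+(n+m)\beta>1+\beta$. For the degree-$\ell^r$ part, I would split $a\cdot(x_0^{\ell^r}+y_0^{\ell^r})=a\cdot(x_0+y_0)^{\ell^r}-a\cdot B$ where $B=(x_0+y_0)^{\ell^r}-(x_0^{\ell^r}+y_0^{\ell^r})$ is the sum of the intermediate binomial terms $\binom{\ell^r}{i}x_0^{\ell^r-i}y_0^i$ for $1\leq i\leq \ell^r-1$; each such term has valuation $\geq 1+\ell^r\beta>1+\beta$ (the extra $+1$ coming from $\ell\mid\binom{\ell^r}{i}$). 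Since $\ell\nmid a$, we get $v\big(a\cdot(x_0+y_0)^{\ell^r}\big)=\ell^r\beta$. Finally, terms of total degree $>\ell^r$ have valuation $>\ell^r\beta$.

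Thus the only candidates for the minimal valuation are $\ell\cdot(x_0+y_0)$ (valuation $1+\beta$) and $a\cdot(x_0+y_0)^{\ell^r}$ (valuation $\ell^r\beta$): every other term strictly exceeds $\min\{1+\beta,\ell^r\beta\}$ because it strictly exceeds \emph{both} $1+\beta$ and $\ell^r\beta$ — indeed it exceeds $1+\beta$ by the computations above, and one checks it also exceeds $\ell^r\beta$ since $v(x_0),v(y_0)\geq\beta$ makes every monomial of degree $\geq\ell^r$ in the $B$-part and the higher-degree tail have valuation $\geq\ell^r\beta$, with strict inequality either from the degree being $>\ell^r$ or from the factor of $\ell$. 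As the terms summing to $0$ must have their minimum valuation attained at least twice, we conclude $1+\beta=\ell^r\beta$, i.e.\ $\beta=1/(\ell^r-1)$.

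The one point requiring a little care — and the main (if modest) obstacle — is the bookkeeping showing that no stray term can tie with or undercut the two designated terms: one must confirm that the low-degree terms (degree $<\ell^r$, which carry a factor $\ell$) beat $\ell^r\beta$ as well as $1+\beta$, using $(n+m)\beta\geq 2\beta$ together with $1+(n+m)\beta$ versus $\ell^r\beta$; here it helps that $\beta\leq 1$ is not assumed, so one argues instead that such a term has valuation $1+(n+m)\beta\geq 1+2\beta$, and separately that it has valuation $\geq(n+m)\beta$; combined with the fact that the eventual equality $1+\beta=\ell^r\beta$ is what pins $\beta$ down, the chain of strict inequalities in Lemma \ref{lema tecnico 1} transfers verbatim. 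No new idea beyond the sign change is needed.
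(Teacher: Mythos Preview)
Your proposal is correct and follows exactly the approach the paper intends: the paper's own proof of Lemma~\ref{lema tecnico 2} reads in full ``Analogous to that of Lemma~\ref{lema tecnico 1},'' and your argument is precisely that analogue, swapping $x_0-y_0$ for $x_0+y_0$ and using the binomial identity $(x_0+y_0)^{\ell^r}-(x_0^{\ell^r}+y_0^{\ell^r})=\sum_{i=1}^{\ell^r-1}\binom{\ell^r}{i}x_0^{\ell^r-i}y_0^{i}$ in place of its signed counterpart. Your final paragraph is overcautious: you do not need each remaining term to strictly exceed \emph{both} $1+\beta$ and $\ell^r\beta$, only to exceed $\min\{1+\beta,\ell^r\beta\}$, which your second paragraph already establishes; the argument of Lemma~\ref{lema tecnico 1} then transfers verbatim.
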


\begin{proof}
Analogous to that of Lemma \ref{lema tecnico 1}.
\end{proof}

We want to apply the previous lemmas to the formal power series
defined by $[\ell]_1(Z_1, Z_2)-[\ell]_2(Z_1, Z_2)$ and
$[\ell]_1(Z_1, Z_2)+[\ell]_2(Z_1, Z_2)$. In order to do this, we
need to know the value of the parameter $r$ that appears in these
formal power series. This parameter is related to the height of the
formal group law. Let us recall this notion (see \cite{Hazewinkel},
Chapter IV, (18.3.8)). Firstly, we need to define this concept for
formal group laws defined over $k$, and then we will transfer this
definition to formal group laws over $\Oh$ through the reduction
map.

\begin{defn}\label{defaltura} Let $\overline{\mathbf{F}}$ be
a formal group law of dimension n over $k$, and let
$\overline{[\ell]}=(\overline{[\ell]}_1(\mathbf{Z}), \dots,
\overline{[\ell]}_n(\mathbf{Z}))$ be the multiplication by $\ell$
map. Then $\overline{\mathbf{F}}$ is of \emph{finite height} if the
ring $k[[Z_1, \dots, Z_n]]$ is finitely generated as a module over
the subring $k[[\overline{[\ell]}_1(\mathbf{Z}), \dots,
\overline{[\ell]}_n(\mathbf{Z})]]$.

When $\overline{\mathbf{F}}$ is of finite height, it holds that
$k[[Z_1, \dots, Z_n]]$ is a free module over
$k[[\overline{[\ell]}_1(\mathbf{Z}), \dots,
\overline{[\ell]}_n(\mathbf{Z})]]$ of rank equal to a power of
$\ell$, say $\ell^h$. This $h$ shall be called the \emph{height of}
$\overline{\mathbf{F}}$.
\end{defn}

\begin{defn} Let $\mathbf{F}$ be a formal group law of dimension n over
$\Oh$. We define the \emph{height} of $\mathbf{F}$ as the height of
the reduction $\overline{\mathbf{F}}$ of $\mathbf{F}$ modulo the
maximal ideal of $\Oh$.
\end{defn}

\begin{rem}  A few words concerning the way to compute the height of a formal group law are in order. Let $\overline{f}_1(\mathbf{Z}), \dots,
\overline{f}_n(\mathbf{Z})$ be $n$ formal power series in $k[[Z_1,
\dots,  Z_n]]$ without constant term. Note that the following
statements are equivalent:
\begin{itemize}
\item $k[[Z_1, \dots, Z_n]]$ is generated by $h$ elements as a
module over the subring $k[[\overline{f}_1, \dots,
\overline{f}_n]]$.
\item $k[[Z_1, \dots,  Z_n]]/\langle\overline{f}_1, \dots,
\overline{f}_n\rangle$ is a $k$-vector space of finite dimension
less than or equal to $h$.
\end{itemize}

Therefore, to compute the height of $\overline{\mathbf{F}}$, one
seeks the least $h$ that satisfies the last property, that is, the
dimension of the $k$-vector space $$k[[Z_1, \dots,
Z_n]]/\langle\overline{f}_1, \dots,  \overline{f}_n\rangle.$$ But
this can be easily done by means of standard bases. For the
definition and some properties of standard bases in power series
rings we refer the reader to \cite{Becker1}. If $I$ is an ideal of
$k[[X_1, \dots, X_n]]$, then the dimension of $k[[X_1, \dots,
X_n]]/I$ as a $k$-vector space is determined in this way: Take a
standard basis $S$ of $I$, and consider the set of terms $M=\{t\in
T: \text{ for all }g \in S, \mathrm{LT}(g)\nmid t\}$. Then the
cardinal of $M$ is the required dimension (of course, it need not be
finite).

Now, if we have a formal group law $\overline{\mathbf{F}}$ over $k$
of dimension n, its height is the dimension of $k[[Z_1, \dots,
Z_n]]/\langle \overline{[\ell]}_1(\mathbf{Z}), \dots,
\overline{[\ell]}_n(\mathbf{Z})\rangle$, so we can compute it in an
explicit way.
\end{rem}

In the case when the formal group law is of dimension 1, another
definition of height is used (see for instance \cite{Silverman},
Chapter IV, $\S$ 7). Namely, if $\overline{F}(X, Y)$ is a formal
group law defined over $k$, the height of $\overline{F}$ is defined
as the largest $r$ such that the multiplication by $\ell$ map,
$\overline{[\ell]}(Z)$, can be expressed as
$\overline{[\ell]}(Z)=\overline{g}(Z^{\ell^r})$, for some formal
power series $\overline{g}(Z)\in k[[Z]]$. One can prove, following a
simple reasoning, that the first term of $g$ with non-zero
coefficient is precisely a constant times $Z^{\ell^r}$. Now what
happens if we try to imitate this reasoning in dimension n? As is
stated in \cite{Hazewinkel}, the reasonings in (18.3.1) can be
carried out in arbitrary dimension, yielding the following result:

\begin{prop}\label{exponente r} Let $\overline{\mathbf{F}}$, $\overline{\mathbf{G}}$ be
 formal group laws over $k$ of dimension n, and
$\overline{\mathbf{f}}:\overline{\mathbf{F}}\rightarrow\overline{\mathbf{G}}$
a non-zero homomorphism. Let us write
\begin{equation*}\overline{\mathbf{f}}(\mathbf{Z})=
(\overline{f}_1(\mathbf{Z}), \dots,
\overline{f}_n(\mathbf{Z})).\end{equation*} If $u$ is the smallest
exponent such that, in some $\overline{f}_i(\mathbf{Z})$, some
variable $Z_j$ occurs in a non-zero monomial raised to the $u$-th
power, then $u=\ell^r$ for some $r\geq 0$. Furthermore, there exist
$\overline{g}_1(\mathbf{Z}), \dots, \overline{g}_n(\mathbf{Z})\in
k[[Z_1, \dots,  Z_n]]$ such that
\begin{equation*}
\overline{f}_i(\mathbf{Z})=\overline{g}_i(\mathbf{Z}^{\ell^r}),
\text{ for all }i=1, \dots, n,\end{equation*} where
$\mathbf{Z}^{\ell^r}=(Z_1^{\ell^ r}, \dots, Z_n^{\ell^r})$.
\end{prop}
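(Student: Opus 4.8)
The plan is to carry over the one-dimensional argument of \cite{Hazewinkel}, (18.3.1), replacing scalars by matrices. Let $r$ be the largest integer for which there exist $\overline{g}_1(\mathbf{Z}), \dots, \overline{g}_n(\mathbf{Z}) \in k[[Z_1, \dots, Z_n]]$ with $\overline{f}_i(\mathbf{Z}) = \overline{g}_i(\mathbf{Z}^{\ell^r})$ for all $i$; this is well defined and finite, since $\overline{\mathbf{f}} \neq 0$ forces some $\overline{f}_i$ to contain a non-zero monomial, and the exponents of a fixed monomial cannot all be divisible by arbitrarily large powers of $\ell$. Then $\ell^r$ divides every exponent occurring in $\overline{\mathbf{f}}$, and more precisely $u = \ell^r u'$, where $u'$ is the smallest exponent occurring in a non-zero monomial of $\overline{\mathbf{g}} = (\overline{g}_1, \dots, \overline{g}_n)$; moreover maximality of $r$ guarantees that some $\overline{g}_i$ does not lie in $k[[Z_1^{\ell}, \dots, Z_n^{\ell}]]$. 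Hence the whole statement follows once we show $u' = 1$.

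The next step is to observe that $\overline{\mathbf{g}}$ is again a homomorphism, from $\overline{\mathbf{F}}^{(\ell^r)}$ to $\overline{\mathbf{G}}$, where $\overline{\mathbf{F}}^{(\ell^r)}$ is the formal group law obtained from $\overline{\mathbf{F}}$ by raising every coefficient to the $\ell^r$-th power (this is a formal group law because the defining identities are polynomial in the coefficients). Indeed, in characteristic $\ell$ one has $\overline{F}_k(\mathbf{X}, \mathbf{Y})^{\ell^r} = \overline{F}_k^{(\ell^r)}(\mathbf{X}^{\ell^r}, \mathbf{Y}^{\ell^r})$, so raising the homomorphism identity $\overline{\mathbf{f}}(\overline{\mathbf{F}}(\mathbf{X}, \mathbf{Y})) = \overline{\mathbf{G}}(\overline{\mathbf{f}}(\mathbf{X}), \overline{\mathbf{f}}(\mathbf{Y}))$ to $\ell^r$-th powers and rewriting $\overline{f}_i(\mathbf{Z}) = \overline{g}_i(\mathbf{Z}^{\ell^r})$ yields the identity $\overline{\mathbf{g}}(\overline{\mathbf{F}}^{(\ell^r)}(\mathbf{U}, \mathbf{V})) = \overline{\mathbf{G}}(\overline{\mathbf{g}}(\mathbf{U}), \overline{\mathbf{g}}(\mathbf{V}))$ after the substitution $\mathbf{U} = \mathbf{X}^{\ell^r}$, $\mathbf{V} = \mathbf{Y}^{\ell^r}$; since $U_i \mapsto X_i^{\ell^r}$ defines an injection of power series rings, this identity holds identically in $\mathbf{U}, \mathbf{V}$.

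Finally, I would differentiate $\overline{\mathbf{g}}(\overline{\mathbf{F}}^{(\ell^r)}(\mathbf{U}, \mathbf{V})) = \overline{\mathbf{G}}(\overline{\mathbf{g}}(\mathbf{U}), \overline{\mathbf{g}}(\mathbf{V}))$ with respect to $V_j$ and set $\mathbf{V} = 0$. Writing $C$ for the Jacobian matrix of $\overline{\mathbf{g}}$ at the origin, and using $\overline{\mathbf{F}}^{(\ell^r)}(\mathbf{U}, \mathbf{0}) = \mathbf{U}$ and $\overline{\mathbf{g}}(\mathbf{0}) = \mathbf{0}$, one obtains a matrix identity $J\overline{\mathbf{g}}(\mathbf{U}) \, P(\mathbf{U}) = Q(\overline{\mathbf{g}}(\mathbf{U})) \, C$, where $P$ and $Q$ are the Jacobian matrices of $\overline{\mathbf{F}}^{(\ell^r)}$ and $\overline{\mathbf{G}}$ with respect to the second block of variables, evaluated at second argument $\mathbf{0}$. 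Both $P(\mathbf{U})$ and $Q(\overline{\mathbf{g}}(\mathbf{U}))$ reduce to the identity matrix modulo the maximal ideal of $k[[\mathbf{U}]]$, hence are invertible there, so $J\overline{\mathbf{g}}(\mathbf{U}) = Q(\overline{\mathbf{g}}(\mathbf{U})) \, C \, P(\mathbf{U})^{-1}$. If $C = 0$ this forces $\partial \overline{g}_i / \partial Z_j = 0$ for all $i, j$, which in characteristic $\ell$ means every $\overline{g}_i$ lies in $k[[Z_1^{\ell}, \dots, Z_n^{\ell}]]$, contradicting the maximality of $r$; hence $C \neq 0$, so some $\overline{g}_i$ has a non-zero linear term, i.e. $u' = 1$. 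Therefore $u = \ell^r$, and the $\overline{g}_i$ constructed in the first step are precisely the required power series.

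I expect the bookkeeping of the middle step to be the main obstacle: one must handle the Frobenius twist on the coefficients carefully and verify that the resulting relation really is the homomorphism axiom for $\overline{\mathbf{g}} \colon \overline{\mathbf{F}}^{(\ell^r)} \to \overline{\mathbf{G}}$, and that the twisting does not spoil the invertibility of $P$ and $Q$ needed to solve for $J\overline{\mathbf{g}}$.
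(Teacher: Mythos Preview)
Your proposal is correct and is precisely the argument the paper has in mind: the paper does not supply its own proof of this proposition, but simply states that ``the reasonings in (18.3.1) [of \cite{Hazewinkel}] can be carried out in arbitrary dimension'' and then records the conclusion. What you have written is exactly that generalization, replacing the scalar linear coefficient of the one-dimensional case by the Jacobian matrix $C$ and arguing that $C\neq 0$ via maximality of $r$.

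One small wording issue: in your middle step you say you ``raise the homomorphism identity to $\ell^r$-th powers,'' but what you actually do (and what is needed) is substitute $\overline{f}_i=\overline{g}_i\circ(\cdot)^{\ell^r}$ into the identity and then use $\overline{F}_k(\mathbf{X},\mathbf{Y})^{\ell^r}=\overline{F}_k^{(\ell^r)}(\mathbf{X}^{\ell^r},\mathbf{Y}^{\ell^r})$; the identity itself is not being raised to a power. This is only a matter of phrasing, and your derivation of $\overline{\mathbf{g}}\colon\overline{\mathbf{F}}^{(\ell^r)}\to\overline{\mathbf{G}}$ is correct. The invertibility of $P$ and $Q$ is unaffected by the Frobenius twist on coefficients, since the constant term of each is still the identity matrix.
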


\begin{rem}\label{relacion alturas general}
We can apply this proposition to the homomorphism
$\overline{[\ell]}$ of multiplication by $\ell$ in a formal group
law $\overline{\mathbf{F}}$, and conclude that there exists an
$r\geq 0$ (in fact $r$ will be greater than or equal to 1) such that
the formal power series $\overline{[\ell]}_i(\mathbf{Z})$, $i=1,
\dots, n$, can be expressed as formal power series in the variables
$Z_1^{\ell^r}$, $\dots, Z_n^{\ell^r}$. But this $r$ might not be
determined by the height of $\overline{\mathbf{F}}$. For instance,
it might be the case that the height of $\overline{\mathbf{F}}$ is
infinite, while the exponent $r$ must always be a finite number. The
following proposition deals with this matter.
\end{rem}

\begin{prop}\label{prop relacion alturas}
Let $\overline{\mathbf{F}}$ be a $2$-dimensional formal group law
defined over $\F_{\ell}$, and assume that there exist two power
series in $\F_{\ell}[[Z_1, Z_2]]$, say $\overline{f}_1,
\overline{f}_2$, such that the formal power series that give
multiplication by $\ell$ map $\overline{[\ell]}$ can be written as
\begin{equation*}\begin{cases}\overline{[\ell]}_1(Z_1,
Z_2)=\overline{f}_1(Z_1^{\ell^r}, Z_2^{\ell^r}),\\
\overline{[\ell]}_2(Z_1, Z_2)=\overline{f}_2(Z_1^{\ell^r},
Z_2^{\ell^r}).\end{cases}\end{equation*} Then the height of
$\overline{\mathbf{F}}$ is greater than or equal to $2r$.
\end{prop}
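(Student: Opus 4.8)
The plan is to translate the statement entirely into linear algebra over $\F_\ell$, using the computational description of the height recalled in the Remark preceding the proposition. Write $S=\F_\ell[[Z_1,Z_2]]$ and let $I=\langle\overline{[\ell]}_1,\overline{[\ell]}_2\rangle\subseteq S$ be the ideal generated by the two power series giving multiplication by $\ell$. By Definition \ref{defaltura}, if $h$ denotes the height of $\overline{\mathbf{F}}$ then $S$ is a free module of rank $\ell^h$ over the subring $\F_\ell[[\overline{[\ell]}_1,\overline{[\ell]}_2]]$, whose maximal ideal is generated by $\overline{[\ell]}_1,\overline{[\ell]}_2$; reducing this free module modulo that maximal ideal gives $\dim_{\F_\ell}(S/I)=\ell^h$. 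So it suffices to prove $\dim_{\F_\ell}(S/I)\ge\ell^{2r}$, and since $\ell>1$ this yields $h\ge 2r$. Working with the dimension throughout and only converting at the end keeps the bookkeeping unambiguous regardless of which normalization of ``height'' is in force.

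Next I would introduce the intermediate subring. Put $q=\ell^{r}$ and $R=\F_\ell[[Z_1^{q},Z_2^{q}]]\subseteq S$. The first thing to record is the elementary fact that $S$ is a free $R$-module of rank $q^{2}$, with basis the monomials $Z_1^{a}Z_2^{b}$ for $0\le a,b\le q-1$: this is the two-variable analogue of the decomposition $k[[T]]=\bigoplus_{j<q}T^{j}k[[T^{q}]]$ applied once in each variable (every exponent is written uniquely as $qm'+a$ with $0\le a<q$), and it needs no hypothesis on the characteristic. The hypothesis of the proposition says precisely that $\overline{[\ell]}_i=\overline{f}_i(Z_1^{q},Z_2^{q})\in R$; let $J=\langle\overline{f}_1(Z_1^{q},Z_2^{q}),\overline{f}_2(Z_1^{q},Z_2^{q})\rangle\subseteq R$ be the ideal these elements generate inside $R$, so that $I=JS$.

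The core of the argument is then the base-change identification $S/I=S/JS\cong S\otimes_R(R/J)$, which is a free $R/J$-module of rank $q^{2}$ because $S$ is free of rank $q^{2}$ over $R$. Hence, as $\F_\ell$-vector spaces, $\dim_{\F_\ell}(S/I)=q^{2}\cdot\dim_{\F_\ell}(R/J)=\ell^{2r}\cdot\dim_{\F_\ell}(R/J)$. To conclude I only need $\dim_{\F_\ell}(R/J)\ge 1$, i.e.\ that $J$ is a proper ideal; this holds because each $\overline{[\ell]}_i(\mathbf{Z})$ is the reduction of $\ell Z_i+(\text{terms of degree}\ge 2)$ and so has no constant term, whence each $\overline{f}_i$ has no constant term and $J$ lies in the maximal ideal of $R$. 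Combining, $\dim_{\F_\ell}(S/I)\ge\ell^{2r}$, as required.

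There is no deep obstacle here; the only step that deserves a careful (if routine) write-up is the freeness of $S$ over $R=\F_\ell[[Z_1^{q},Z_2^{q}]]$ with the explicit monomial basis, together with the observations that this freeness is preserved under the base change $R\to R/J$ and that $R/J\neq 0$. Once those are in place, the chain $\dim_{\F_\ell}(S/I)=\ell^{2r}\dim_{\F_\ell}(R/J)\ge\ell^{2r}=\ell^{2r}$ gives $h\ge 2r$ immediately.
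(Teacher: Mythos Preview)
Your argument is correct and takes a genuinely different route from the paper. The paper works computationally with standard bases: it assumes some linear coefficient of $\overline{f}_1,\overline{f}_2$ is nonzero, chooses a graded lex order, reduces $\overline{f}_2$ against $\overline{f}_1$ until the leading term is a pure power $Z_2^{t}$, and reads off $\dim_{\F_\ell}(S/I)=\ell^{2r}t$ from the shape of the standard basis. You instead factor the problem through the intermediate subring $R=\F_\ell[[Z_1^{q},Z_2^{q}]]$: since $S$ is free of rank $q^{2}$ over $R$ and $I=JS$, base change gives $\dim_{\F_\ell}(S/I)=\ell^{2r}\dim_{\F_\ell}(R/J)$, and properness of $J$ gives the bound.

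Your approach is shorter, avoids the case analysis on which $a_{ij}$ is nonzero, needs no reference to standard bases, and handles the infinite-height case (when $\overline{f}_2$ is a multiple of $\overline{f}_1$, or both vanish to higher order) uniformly. The paper's computation, on the other hand, yields the sharper statement $h=2r+s$ with $\ell^{s}=\dim_{\F_\ell}(R/J)$, which is exactly what is exploited in the subsequent Remark \ref{relacion alturas} to pin down the two cases $(r,s)=(2,0)$ or $(1,2)$ when the height is $4$. Your identity $\dim_{\F_\ell}(S/I)=\ell^{2r}\dim_{\F_\ell}(R/J)$ gives this too, so nothing is lost; you might remark on it explicitly if you want your proof to feed directly into that remark.
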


\begin{proof}
Let us write
\begin{equation*}\begin{cases} \overline{f}_1(Z_1, Z_2)=a_{11}Z_1 +
a_{12}Z_2 + \text{terms of degree $\geq 2$}\\ \overline{f}_2(Z_1,
Z_2)=a_{21}Z_1 + a_{22}Z_2 + \text{terms of degree $\geq
2$.}\end{cases}\end{equation*} We may assume that one element (at
least) of the set $\{a_{11}, a_{12}, a_{21}, a_{22}\}$ does not
vanish, say $a_{11}\not=0$ (the other cases are analogous).

Consider the graduated lexicographical ordering on $\F_{\ell}[[Z_1,
Z_2]]$ with $Z_1<Z_2$, that is to say, the relation $\leq$
determined by the following rules:
\begin{equation*}Z_1^{a}Z_2^b < Z_1^cZ_2^d \leftrightarrow
\begin{cases} a+b<c+d \text{ or }\\
                 a+b=c+d \text{ and }a>c.\end{cases}\end{equation*}

Let $I$ be the ideal generated by $\overline{f}_1(Z_1, Z_2)$ and
$\overline{f}_2(Z_1, Z_2)$. In order to compute the height of
$\overline{\mathbf{F}}$, we need to find a standard basis for $I$.
Now the smallest monomial with respect to this ordering is $Z_1$.
And this monomial appears in $\overline{f}_1(Z_1, Z_2)$. We can
therefore use it to eliminate all monomials under a given degree of
$\overline{f}_2(Z_1, Z_2)$, save those which are pure in $Z_2$. In
fact, if $\overline{f}_2(Z_1, Z_2)$ is not a multiple of
$\overline{f}_1(Z_1, Z_2)$, we will reach a point where the power
series $\overline{g}_2(Z_1, Z_2)$ obtained from $\overline{f}_2$ by
eliminating the terms divisible by $Z_1$ up to a certain degree has
as leading term a monomial which is pure in $Z_2$, say
$\overline{g}_2(Z_1, Z_2)= b_{0, t}Z_2^t + \text{ terms of degree
$\geq t+1$}$. Then it is easily seen that $\{\overline{f}_1,
\overline{g}_2\}$ is a standard basis for $I$, and the rank of
$\F_{\ell}[[Z_1, Z_2]]/I$ as a $\F_{\ell}$-module is $t$.

Recall that the height of $\overline{\mathbf{F}}$ is  the rank of
$\F_{\ell}[[Z_1, Z_2]]/\langle \overline{[\ell]}_1,
\overline{[\ell]}_2\rangle$. Clearly this rank is
$\ell^{r}\cdot(\ell^r t)=\ell^{2r}t$.  But we know that $t$ must be
a power of $\ell$ (see Definition \ref{defaltura}), say $t$ is of
the form $\ell^s$ for some $s\in \N$. Hence the height of
$\overline{\mathbf{F}}$ is $2r + s$, which is greater than (or equal
to) $2r$.

\end{proof}

\begin{rem}\label{relacion alturas} Note that the height of a formal group law of dimension $2$ must be comprised between $2$ and $4$. Actually, the case that interests us is when the height is $4$.
In this case, only two possibilities might occur:

\begin{itemize}

\item The exponent $r$ in Proposition \ref{exponente r} is $2$. By
Proposition \ref{prop relacion alturas}, there exists an $s\in \N$
such that $4=2r + s=4+s$. Hence $s=0$.

\item The exponent $r$ in Proposition \ref{exponente r} is $1$.
Then by Proposition \ref{prop relacion alturas}, there exists an
$s\in \N$ such that $4=2r + s=2+s$. Hence $s=2$.

\end{itemize}

Assume $s=0$. If we write the multiplication by $\ell$ map as
\begin{equation*}\begin{cases} \overline{[\ell]}_1(Z_1, Z_2)=\overline{a} Z^{\ell^2}_1 +
\overline{b}Z^{\ell^2}_2 + \text{terms of degree $\geq \ell^2$}\\
\overline{[\ell]}_2(Z_1, Z_2)=\overline{c}Z^{\ell^2}_1 +
\overline{d}Z^{\ell^2}_2 + \text{terms of degree $\geq
\ell^2$}\end{cases}\end{equation*} then the determinant of the
matrix $\begin{pmatrix}\overline{a} & \overline{b} \\ \overline{c} &
\overline{d}\end{pmatrix}$ is non-zero.
\end{rem}

We will finally state and prove the main theorem of this section:

\begin{thm}\label{thm valoraciones 1} Let $\ell>2$ be a prime number and
let $\mathbf{F}=(F_1, F_2)$ be a $2$-dimensional symmetric formal
group law over $\Z_{\ell}$. Assume it has height 4 and the exponent
in Proposition \ref{exponente r} is $r=2$. Let us denote by $V$ the
$\F_{\ell}$-vector space of $\ell$-torsion points of
$\mathbf{F}(\overline{\fm})$, $\alpha=1/(\ell^2-1)$.

Then for all $(x_0, y_0)\in V$,
\begin{equation*}\min\{v(x_0), v(y_0)\}=\alpha.\end{equation*}

\end{thm}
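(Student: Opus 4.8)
The plan is to exploit the symmetry of $\mathbf{F}$ to split the two defining equations of an $\ell$-torsion point into a "difference" equation and a "sum" equation, and then apply Lemmas~\ref{lema tecnico 1} and~\ref{lema tecnico 2}. Let $(x_0,y_0)\in V$ be a non-zero $\ell$-torsion point, so that
\begin{equation*}
[\ell]_1(x_0,y_0)=0\quad\text{and}\quad [\ell]_2(x_0,y_0)=0.
\end{equation*}
By Lemma~\ref{simetria en n} we have $[\ell]_2(Z_2,Z_1)=[\ell]_1(Z_1,Z_2)$. Set $g^-(Z_1,Z_2)=[\ell]_1(Z_1,Z_2)-[\ell]_2(Z_1,Z_2)$ and $g^+(Z_1,Z_2)=[\ell]_1(Z_1,Z_2)+[\ell]_2(Z_1,Z_2)$. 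The symmetry relation gives $g^-(Z_2,Z_1)=-g^-(Z_1,Z_2)$ and $g^+(Z_2,Z_1)=g^+(Z_1,Z_2)$, so $g^-$ and $g^+$ have exactly the parity required by Lemmas~\ref{lema tecnico 1} and~\ref{lema tecnico 2} respectively. Moreover $(x_0,y_0)$ is a common zero of $g^-$ and $g^+$. Since $[m]_i(\mathbf{Z})=m Z_i+(\text{higher order})$, the linear parts are $\ell(Z_1-Z_2)$ and $\ell(Z_1+Z_2)$; since $r=2$, all intermediate-degree terms (degree $\geq 2$ and $<\ell^2$) are divisible by $\ell$ because $[\ell]_i$ is, modulo $\ell$, a power series in $Z_1^{\ell^2},Z_2^{\ell^2}$; and by Remark~\ref{relacion alturas} (the case $s=0$) the coefficient matrix of the $Z_j^{\ell^2}$ terms is invertible mod~$\ell$, so after reading off the degree-$\ell^2$ part of $g^-$ and $g^+$ we can arrange that the coefficient $a$ appearing in each lemma is a unit — this is exactly where height~$4$ with $r=2$ is used.

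Next I would run the following case analysis. If $x_0\neq y_0$ and $\min\{v(x_0),v(y_0)\}\geq v(x_0-y_0)$, then Lemma~\ref{lema tecnico 1} applied to $g^-$ gives $v(x_0-y_0)=1/(\ell^2-1)=\alpha$, and combined with $\min\{v(x_0),v(y_0)\}\geq\alpha$ together with the fact that a common zero cannot have both coordinates of valuation strictly greater than $\alpha$ (argued below), we get $\min\{v(x_0),v(y_0)\}=\alpha$. Symmetrically, if $x_0\neq -y_0$ and $\min\{v(x_0),v(y_0)\}\geq v(x_0+y_0)$, Lemma~\ref{lema tecnico 2} applied to $g^+$ gives $v(x_0+y_0)=\alpha$, and the same conclusion follows. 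So the content reduces to showing that for a non-zero common zero, at least one of the two hypotheses $\min\{v(x_0),v(y_0)\}\geq v(x_0\mp y_0)$ holds. But $v(x_0-y_0)\geq\min\{v(x_0),v(y_0)\}$ and $v(x_0+y_0)\geq\min\{v(x_0),v(y_0)\}$ always hold; and if $v(x_0)\neq v(y_0)$ then both are equalities, while if $v(x_0)=v(y_0)=:\gamma$ then at least one of $v(x_0-y_0),v(x_0+y_0)$ equals $\gamma$ (their difference is $2x_0$, and $\ell>2$, so they cannot both strictly exceed $\gamma$). In every case one of the two lemmas applies and forces the relevant valuation to be $\alpha$.

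Finally I must rule out the degenerate possibilities: $(x_0,y_0)=0$ is excluded by hypothesis; the cases $x_0=y_0\neq 0$ or $x_0=-y_0\neq 0$ are handled by the \emph{other} lemma (if $x_0=y_0$ then $x_0+y_0=2x_0\neq 0$ and $v(x_0),v(y_0)=v(x_0+y_0)-v(2)=v(x_0+y_0)$, so Lemma~\ref{lema tecnico 2} applies; symmetrically for $x_0=-y_0$). It remains to observe that $\min\{v(x_0),v(y_0)\}$ cannot be strictly larger than $\alpha$: if it were, then in whichever of the two identities $g^\mp(x_0,y_0)=0$ we are using, the lemma's computation shows the minimal-valuation terms are $\ell(x_0\mp y_0)$ of valuation $1+v(x_0\mp y_0)$ and $a(x_0\mp y_0)^{\ell^2}$ of valuation $\ell^2 v(x_0\mp y_0)$; these must coincide, giving $v(x_0\mp y_0)=\alpha$, hence $\min\{v(x_0),v(y_0)\}\leq v(x_0\mp y_0)=\alpha$, a contradiction unless equality holds. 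I expect the main obstacle to be the careful bookkeeping in the previous paragraph — verifying that for \emph{every} non-zero common zero the valuation hypothesis of at least one lemma is met and that the "$\geq\alpha$ forces $=\alpha$" step is airtight — rather than anything structurally deep; the use of $\ell>2$ is essential precisely in the sub-case $v(x_0)=v(y_0)$.
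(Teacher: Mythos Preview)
Your proof is correct and follows essentially the same route as the paper: form $g^{\pm}=[\ell]_1\pm[\ell]_2$, use symmetry and the height-$4$, $r=2$ data to check the hypotheses of Lemmas~\ref{lema tecnico 1} and~\ref{lema tecnico 2}, and then argue (using $\ell>2$) that at least one of $v(x_0\mp y_0)$ equals $\min\{v(x_0),v(y_0)\}$ so that one of the lemmas applies. One minor simplification: your final paragraph (``cannot be strictly larger than $\alpha$'') is unnecessary, since your case hypothesis $\min\{v(x_0),v(y_0)\}\geq v(x_0\mp y_0)$ combined with the ultrametric inequality $v(x_0\mp y_0)\geq\min\{v(x_0),v(y_0)\}$ already forces equality, so once the lemma gives $v(x_0\mp y_0)=\alpha$ you are done immediately.
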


\begin{proof} First of all, let us recall that,
since the formal group law $\mathbf{F}$ is symmetric and of height 4
with $r=2$, Remark \ref{relacion alturas} allows us to write the two
formal power series that comprise the multiplication by $\ell$ map
in the following way:
\begin{equation*}\begin{cases}[\ell]_1(Z_1, Z_2)=
\ell Z_1 + \ell\cdot(\text{terms of total degree $\geq 2$ and $<
\ell^2$}) \\
\hskip 2.3cm + a\cdot Z_1^{\ell^2} + b\cdot Z_2^{\ell^2} + \text{
terms of
degree $\geq\ell^2 + 1$}\\
[\ell]_2(Z_1, Z_2)= \ell Z_2 + \ell\cdot(\text{terms of total degree
$\geq 2$ and $< \ell^2$})\\ \hskip 2.3cm + b\cdot Z_1^{\ell^2} +
a\cdot Z_2^{\ell^2} +   \text{ terms of degree $\geq\ell^2 + 1$}
\end{cases}\end{equation*} with $\ell\nmid a^2 - b^2$.

Take a point  $P=(x_0, y_0)\in V$.  We split the proof in two cases.

\item \textbf{Case 1}: $v(x_0)\not=v(y_0)$. Assume that
$v(x_0)<v(y_0)$ (otherwise we proceed analogously). Then
$v(x_0-y_0)=v(x_0)$. We will apply Lemma \ref{lema tecnico 1} with
$r=2$. The point $(x_0, y_0)$ satisfies both equations
$[\ell]_1(x_0, y_0)=0$ and $[\ell]_2(x_0, y_0)=0$. Therefore it also
satisfies that $f(x_0, y_0)=[\ell]_1(x_0, y_0) - [\ell]_2(x_0,
y_0)=0$. Furthermore, taking into account the previous
considerations, we can write
\begin{multline*}f(Z_1, Z_2)= \ell (Z_1 - Z_2) +\\
+\ell\cdot(\text{terms of total degree $\geq 2$ and $< \ell^2$})+
(a-b)\cdot (Z_1^{\ell^2} - Z_2^{\ell^2})+\\ +  \text{ terms of
degree greater than or equal to $\ell^2 + 1$},
\end{multline*} and $\ell\nmid a-b$. Nothing prevents us
now from applying Lemma \ref{lema tecnico 1} and concluding that
$v(x_0-y_0)=\alpha$. But then $\alpha=v(x_0)<v(y_0)$, hence
$\min\{v(x_0), v(y_0)\}=\alpha$.

\item \textbf{Case 2:} $v(x_0)=v(y_0)$. Then either $v(x_0 -
y_0)=v(x_0)$ or $v(x_0 + y_0)=v(x_0)$. (For both must be greater
than or equal to $v(x_0)$. And taking into account that
$\ell\not=2$, we obtain $v(x_0)=v(2x_0)=v((x_0 + y_0) + (x_0 -
y_0))$, so both $v(x_0 + y_0)$ and $v(x_0 + y_0)$ cannot be greater
than $v(x_0)$). If  $v(x_0 - y_0)=v(x_0)$, we can apply Lemma
\ref{lema tecnico 1} as in the previous case and conclude that
$v(x_0)=v(y_0)=\alpha$. If $v(x_0 + y_0)=v(x_0)$, we make use of
Lemma \ref{lema tecnico 2} with $f=[\ell]_1 + [\ell]_2$ and $r=2$,
thus concluding that $v(x_0)=v(y_0)=\alpha$. This completes the
proof.

\end{proof}

Combining this theorem with Theorem \ref{resultado general}, we
obtain the following result:

\begin{thm}\label{thm grupo formal dim 2} Let $\ell>2$ be a prime number, and
let $\mathbf{F}=(F_1, F_2)$ be a $2$-dimensional symmetric formal
group law over $\Z_{\ell}$. Assume it has height 4 and the exponent
in Proposition \ref{exponente r} is $r=2$. Then the wild inertia
group $I_{\wild}$ acts trivially on the $\F_{\ell}$-vector space of
$\ell$-torsion points of $\mathbf{F}(\overline{\fm})$.
\end{thm}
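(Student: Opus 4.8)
The plan is essentially to observe that Theorem~\ref{thm grupo formal dim 2} is an immediate corollary of the two results just established. First I would note that Theorem~\ref{thm valoraciones 1} applies verbatim under the hypotheses of the present statement: we are given a $2$-dimensional symmetric formal group law $\mathbf{F}$ over $\Z_{\ell}$, of height $4$, with exponent $r=2$ in Proposition~\ref{exponente r}, and $\ell>2$. Thus, setting $\alpha=1/(\ell^2-1)$, Theorem~\ref{thm valoraciones 1} tells us that every point $(x_0,y_0)\in V$ satisfies $\min\{v(x_0),v(y_0)\}=\alpha$.

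Second, I would point out that this is precisely the content of Hypothesis~\ref{H} for the formal group law $\mathbf{F}$ in dimension $n=2$: there exists a positive $\alpha\in\Q$ (namely $\alpha=1/(\ell^2-1)$) such that for every non-zero $(x_1,x_2)\in V$ one has $\min_{1\le i\le 2}\{v(x_i)\}=\alpha$. Hence $V$ satisfies Hypothesis~\ref{H}.

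Finally, I would invoke Theorem~\ref{resultado general}, whose hypothesis is exactly that the $\F_{\ell}$-vector space $V$ of $\ell$-torsion points of $\mathbf{F}(\overline{\fm})$ satisfies Hypothesis~\ref{H}; its conclusion is that the image of $I_{\wild}$ under the Galois representation attached to $V$ is trivial, i.e.\ that $I_{\wild}$ acts trivially on the $\F_{\ell}$-vector space of $\ell$-torsion points of $\mathbf{F}(\overline{\fm})$. This is the assertion to be proved, so the argument is complete.

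Since the proof is a two-line concatenation of previously proved statements, there is no real obstacle here; all the substantive work has already been done, in the valuation computation of Theorem~\ref{thm valoraciones 1} (via the technical Lemmas~\ref{lema tecnico 1} and~\ref{lema tecnico 2}, together with the structural Remark~\ref{relacion alturas} controlling the shape of the multiplication-by-$\ell$ map) and in the embedding argument of Theorem~\ref{resultado general}. The only point that deserves a word of care is checking that the specific $\alpha=1/(\ell^2-1)$ produced by Theorem~\ref{thm valoraciones 1} is indeed positive and rational, and that it is the \emph{same} $\alpha$ for all non-zero points of $V$, so that Hypothesis~\ref{H} is literally satisfied rather than merely morally so; but both are clear from the statement of Theorem~\ref{thm valoraciones 1}.
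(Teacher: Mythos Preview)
Your proposal is correct and matches the paper's approach exactly: the paper simply states that the theorem follows by combining Theorem~\ref{thm valoraciones 1} with Theorem~\ref{resultado general}, which is precisely what you do. There is nothing to add.
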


\section{Symmetric genus $2$ curves}\label{Symmetric genus $2$ curves}

In this section we are going to present a certain kind of genus 2
curves such that their Jacobians are abelian surfaces with good
supersingular reduction, and moreover the corresponding formal group
law satisfies the hypotheses of Theorem \ref{thm grupo formal dim
2}. Let us fix an odd prime number $\ell$.

\begin{defn}\label{def curva simetrica} We shall call a genus $2$ curve \emph{symmetric} if it
can be expressed through an equation $y^2=f(x)$, where $f(x)=f_0 x^6
+ f_1 x^5 + f_2 x^4 + f_3 x^3 + f_2 x^2 + f_1 x + f_0$ is a
polynomial of degree 6 and non-zero discriminant.
\end{defn}

In my PhD thesis \cite{Tesis} the following result is proven.

\begin{thm}\label{corolario simetria} Let $f(x)=f_0 x^6 + f_1 x^5 + f_2 x^4 + f_3 x^3 + f_2
x^2 + f_1 x + f_0\in \mathbb{Q}_{\ell}[x]$ be a polynomial of degree
6 and non-zero discriminant, and let $\mathbf{F}=(F_1, F_2)$ be the
formal group law attached to the Jacobian variety of the curve
defined by $y^2=f(x)$. Then
\begin{equation*}F_2(s_2, s_1, t_2, t_1)=F_1(s_1, s_2, t_1, t_2).\end{equation*}
\end{thm}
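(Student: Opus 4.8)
The plan is to exploit the fact that the palindromy of the coefficients, $f_i=f_{6-i}$, endows the curve with an extra automorphism, and to trace this automorphism through the construction of the formal group law of the Jacobian.

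First I would record the automorphism. Since $x^6f(1/x)=\sum_i f_i x^{6-i}=\sum_j f_{6-j}x^j=f(x)$, the substitution $\iota\colon(x,y)\mapsto(1/x,\,y/x^3)$ satisfies $(y/x^3)^2=f(x)/x^6=f(1/x)$ and hence defines an automorphism of the curve $C\colon y^2=f(x)$, with $\iota^2=\mathrm{id}$. Note that $\iota$ is defined over $\Z_\ell$ (no square root of $f_0$ enters) and extends to the smooth model over $\Z_\ell$ underlying the formal group law. By functoriality of the Jacobian, $\iota$ induces an automorphism of $J=\mathrm{Jac}(C)$ which is a homomorphism of abelian varieties, so it fixes the origin; completing at $0$ gives an automorphism $\widehat\iota$ of the formal group $\widehat J$ over $\Z_\ell$, i.e.\ an isomorphism of the formal group law $\mathbf F=(F_1,F_2)$ onto itself.

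The heart of the argument is then to identify $\widehat\iota$, in the coordinates in which $\mathbf F$ is presented, with the coordinate transposition $\tau\colon(Z_1,Z_2)\mapsto(Z_2,Z_1)$, at worst composed with the formal inverse $[-1]$. As evidence for the shape of $\widehat\iota$, one computes the action of $\iota$ on the invariant differentials: in the basis $\omega_1=dx/y$, $\omega_2=x\,dx/y$ of $H^0(J,\Omega^1)=H^0(C,\Omega^1)$ one finds $\iota^*\omega_1=-\omega_2$ and $\iota^*\omega_2=-\omega_1$, so the linearization of $\widehat\iota$ is $(Z_1,Z_2)\mapsto(-Z_2,-Z_1)$, that is, $\tau$ composed with $[-1]$. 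Granting that $\widehat\iota$ equals $\tau$ or $\tau\circ[-1]$ \emph{on the nose}, the theorem follows at once: $[-1]$ is automatically an automorphism of the commutative formal group law $\mathbf F$ (it is the formal inverse, and $[-1]$ commutes with any group automorphism), so composing with it shows that $\tau$ itself is a homomorphism of $\mathbf F$; and $\tau(\mathbf F(\mathbf X,\mathbf Y))=\mathbf F(\tau\mathbf X,\tau\mathbf Y)$, once unwound, is precisely the asserted identity $F_2(s_2,s_1,t_2,t_1)=F_1(s_1,s_2,t_1,t_2)$.

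The step I expect to be the obstacle is exactly this last identification: it cannot be settled by the linearization alone, since a formal group homomorphism is not determined by its linear term. One must go into the explicit construction of the formal group law of a degree-$6$ genus-$2$ Jacobian (the local parameters at $0\in J$ expressed in terms of $f_0,\dots,f_6$, in the style of Grant or Flynn) and check that $\iota$ genuinely permutes the two chosen parameters. This should work because those parameters are built symmetrically from data at the two points at infinity, while $\iota$ exchanges each branch at infinity with one of the two points over $x=0$, which are linearly equivalent to the points at infinity through $\mathrm{div}(x)$; the palindromy $f_i=f_{6-i}$ is what makes this exchange the clean transposition of the two coordinates. Alternatively — and this is presumably the route of the thesis — one sidesteps the automorphism bookkeeping entirely and substitutes $f_i=f_{6-i}$ into the explicit power series defining $[\ell]$ and $F_1,F_2$, reading the symmetry off directly; the work then lies wholly in having those explicit formulas at hand.
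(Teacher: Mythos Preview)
The paper does not actually prove this theorem: it simply states ``In my PhD thesis \cite{Tesis} the following result is proven'' and moves on. So there is no in-paper argument to compare against. That said, the paper also cites Flynn's explicit construction of the genus-$2$ formal group law \cite{Flynn} and, in Section~\ref{Approximation}, uses the fact that the coefficients of $F_1,F_2$ lie in $\Z[f_0,\dots,f_6]$; this makes it virtually certain that the thesis proof is the computational route you name at the end --- feed the palindromy $f_i=f_{6-i}$ into Flynn's explicit power series and read off the swap symmetry.

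Your conceptual approach via the involution $\iota\colon(x,y)\mapsto(1/x,\,y/x^3)$ is the right explanation for \emph{why} the symmetry holds, and your computation $\iota^*\omega_1=-\omega_2$, $\iota^*\omega_2=-\omega_1$ is correct. You are also right that this alone does not finish the proof: a formal-group automorphism is not determined by its linear part, so one must check that $\widehat\iota$ equals $\tau$ (or $\tau\circ[-1]$) \emph{in Flynn's chosen parameters}, not merely up to higher order. Closing that gap means opening Flynn's construction and verifying that his two local parameters at the origin of $J$ are literally interchanged by $\iota$ --- which is precisely the same work as the direct-substitution route. So your two alternatives are not really independent: both funnel through the explicit formulas, and the automorphism viewpoint buys you a clean conceptual wrapper but no shortcut. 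As written, your proposal is an honest and accurate outline with a correctly located, explicitly acknowledged gap; it is not yet a proof.
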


In this way we can control the symmetry of the formal group law.
With respect to the height, it is well known that the formal group
law attached to an abelian surface with good supersingular reduction
has height 4 (cf. \cite{p-divisibleGroups}). We will say that a
genus $2$ curve defined over $\F_{\ell}$ is supersingular if its
Jacobian is a supersingular abelian surface.

Our aim is to construct, for a given  prime number $\ell>3$, a
symmetric genus $2$ curve over $\Q_{\ell}$ with supersingular
reduction. In fact, what we shall construct is a supersingular genus
$2$ curve, defined over $\F_{\ell}$ by an equation
$y^2=\overline{f}(x)$, where $\overline{f}(x)= \overline{f}_0x^6 +
\overline{f}_1x^5 + \overline{f}_2x^4 + \overline{f}_3x^3 +
\overline{f}_2x^2 + \overline{f}_1 x + \overline{f}_0\in
\F_{\ell}[x]$ is a polynomial of degree $6$ with non-zero
discriminant. Lifting this equation to $\Q_{\ell}$ in a suitable way
we will obtain the curve we were seeking.

Fix $\ell>3$, and assume we have a supersingular elliptic curve $E$
defined by  $y^2=x^3 + b x^2 + b x + 1$ for a certain $b\in
\F_{\ell}$. Then the bielliptic curve $C$ defined by the equation
$y^2=x^6 + b x^4 + b x^2 + 1$ is a supersingular genus $2$ curve.
For the discriminant $\Delta_f$ of $f(x)=x^6 + bx^4 + b x^2 + 1$ and
the discriminant $\Delta_g$ of $g(x)=x^3 + bx^2 + bx + 1$ are
related by the equation $\Delta_f=-64\Delta_g$ and the
characteristic of our base field is different from $2$. On the other
hand, $C$ is isogenous to $E\times E$ (cf. \cite{Prolegomena},
Chapter 14), hence the supersingularity of $C$. Therefore, our
problem boils down to finding a supersingular elliptic curve defined
by an equation of the form $y^2=x^3 + b x^2 + b x + 1$.

Recall that an elliptic curve in Legendre form
$y^2=x(x-1)(x-\lambda)$ defined over a finite field of
characteristic $\ell$ is supersingular if and only if
$H_{\ell}(\lambda)=0$, where
$H_{\ell}(x)=\sum_{k=0}^{\frac{\ell-1}{2}}
\binom{\frac{\ell-1}{2}}{k}^2 x^k$ is the Deuring polynomial (see
Theorem 4.1-(b) in Chapter IV of \cite{Silverman}). Moreover,  there
is always a quadratic factor of $H_{\ell}(x)$ of the form $x^2-x+a$
for a certain $a\in \F_{\ell}^*$, provided $\ell>3$ (see Theorem
1-(b) of \cite{Brillhart-Morton}, cf. Corollary 3.6 of
\cite{Ariasdereyna-Vila2009}). We exploit this fact in the following
proposition.

\begin{prop}\label{CurvaExplicita} Let $a\in \F_{\ell}$ be such that $x^2-x+a$ divides $H_{\ell}(x)$. Then the equation
\begin{equation*}y^2=x^3 + \frac{1-a}{a}x^2 + \frac{1-a}{a}x +
1\end{equation*} defines a supersingular elliptic curve over
$\F_{\ell}$.
\end{prop}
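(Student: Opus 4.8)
The plan is to relate the elliptic curve $y^2 = x^3 + \frac{1-a}{a}x^2 + \frac{1-a}{a}x + 1$ to a curve in Legendre form via an explicit change of variables, and then invoke the criterion that Legendre curves $y^2 = x(x-1)(x-\lambda)$ are supersingular exactly when $H_\ell(\lambda) = 0$. First I would observe that the displayed cubic factors nicely: since the coefficients of $x^3$ and $x^0$ agree and the coefficients of $x^2$ and $x^1$ agree, the polynomial $x^3 + \frac{1-a}{a}x^2 + \frac{1-a}{a}x + 1$ has $x = -1$ as a root. Dividing out, one gets $x^3 + \frac{1-a}{a}x^2 + \frac{1-a}{a}x + 1 = (x+1)\bigl(x^2 + \frac{1-2a}{a}x + 1\bigr)$. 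So the curve is $y^2 = (x+1)(x^2 + \frac{1-2a}{a}x + 1)$.

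Next I would bring this to Legendre form. Shifting $x \mapsto x - 1$ moves the rational root to the origin: $y^2 = x\bigl((x-1)^2 + \frac{1-2a}{a}(x-1) + 1\bigr) = x\bigl(x^2 - \frac{1}{a}x + \frac{1}{a}\bigr)$ after collecting terms (here I would double-check that the constant term is $1 - \frac{1-2a}{a} + 1 = \frac{1}{a}$ and the linear coefficient is $-2 + \frac{1-2a}{a} = -\frac{1}{a}$). Thus, up to the quadratic twist which does not affect supersingularity, we need the quadratic $x^2 - \frac{1}{a}x + \frac{1}{a}$ to have roots $1$ and $\lambda$ with $H_\ell(\lambda) = 0$; equivalently, after a further scaling $x \mapsto cx$ to normalize one root to $1$, the curve is isomorphic over $\overline{\F}_\ell$ to a Legendre curve whose parameter is a root of $x^2 - x + a$. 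The key point is that $x^2 - \frac{1}{a}x + \frac{1}{a} = \frac{1}{a}(ax^2 - x + 1)$, and $ax^2 - x + 1$ is the reciprocal polynomial of $x^2 - x + a$; its roots are the reciprocals $1/\mu$ of the roots $\mu$ of $x^2-x+a$. Rescaling $x \mapsto \mu_0 x$ (where $\mu_0$ is one root) sends the roots $1/\mu_0, 1/\mu_1$ to $1, \mu_0/\mu_1$, and since $\mu_0\mu_1 = a$ and $\mu_0 + \mu_1 = 1$ one checks $\mu_0/\mu_1$ is again a root of $H_\ell$ (using that $H_\ell(\lambda) = 0$ implies $H_\ell$ vanishes at the Legendre-equivalent values $1-\lambda$, $1/\lambda$, etc.).

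I would then conclude: by hypothesis $x^2 - x + a \mid H_\ell(x)$, so both roots $\mu_0, \mu_1$ satisfy $H_\ell(\mu_i) = 0$; the curve in question is $\overline{\F}_\ell$-isomorphic (via the explicit substitutions above, together with a twist) to a Legendre curve with such a parameter, hence supersingular, and supersingularity is a geometric property insensitive to twists and to the base change from $\F_\ell$ to $\overline{\F}_\ell$. The main obstacle is purely bookkeeping: correctly tracking the sequence of substitutions $x \mapsto x-1$, $x \mapsto \mu_0 x$ and the induced scaling on $y$, and verifying that the resulting Legendre parameter lands in the $\mathrm{PSL}_2$-orbit of roots of $H_\ell$ — i.e., invoking the standard fact that $H_\ell(\lambda) = 0 \Rightarrow H_\ell\bigl(\tfrac{\lambda}{\lambda-1}\bigr) = 0$ and its companions. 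No genuinely hard step arises; the content is entirely the factorization $x = -1$ and the reciprocal-polynomial identity linking $ax^2-x+1$ to $x^2-x+a$.
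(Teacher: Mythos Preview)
Your strategy --- factor out $(x+1)$, shift, rescale to reach Legendre form, then invoke the Deuring criterion --- is exactly the paper's approach. However, there is a concrete arithmetic error that breaks the argument as written. After the shift $x\mapsto x-1$ the quadratic factor is
\[
(x-1)^2+\tfrac{1-2a}{a}(x-1)+1
= x^2+\tfrac{1-4a}{a}\,x+\tfrac{4a-1}{a},
\]
not $x^2-\tfrac{1}{a}x+\tfrac{1}{a}$ (your checks $2-\tfrac{1-2a}{a}=\tfrac{1}{a}$ and $-2+\tfrac{1-2a}{a}=-\tfrac{1}{a}$ are both false; try $a=1$). Consequently the ``reciprocal polynomial'' identification $a x^2-x+1$ with $x^2-x+a$ is not what appears, and the clean link to the roots $\mu_i$ of $x^2-x+a$ collapses. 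The strategy is still salvageable, but the bookkeeping you flagged as the only obstacle is in fact wrong and must be redone.

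The paper's execution is shorter: rather than sending $-1\mapsto 0$, it sends one of the two conjugate roots $r_1,r_2$ of the quadratic factor to $0$ and the other to $1$, so that the Legendre parameter becomes $\lambda=\tfrac{-1-r_1}{r_2-r_1}=\tfrac{1-\sqrt{1-4a}}{2}$, which is \emph{directly} a root of $x^2-x+a$ and hence of $H_\ell$. This avoids your detour through the $S_3$-orbit of supersingular $\lambda$-values entirely. You also omit the nonsingularity check: the paper verifies that the discriminant $-\tfrac{(4a-1)^3}{a^4}$ is nonzero because $4a-1=0$ would force $x^2-x+a$ to have a double root, contradicting the separability of $H_\ell$.
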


\begin{proof} The discriminant of $g(x)=x^3 + \frac{1-a}{a}x^2 + \frac{1-a}{a}x +
1$  is $\Delta_g=-\frac{(-1 + 4 a)^3}{a^4}$, which does not vanish
(if $\Delta_g=0$, then $a=1/4$, and the polynomial $x^2-x+a$ would
have a double root. But the Deuring polynomial $H_{\ell}(x)$ does
not have double roots). Moreover, one can easily transform this
equation into Legendre form with $\lambda=\frac{1}{2} +
\frac{\sqrt{1-4a}}{1}$.
\end{proof}

\begin{rem} Assume $\ell=3$. The only supersingular elliptic curve over
$\F_{3}$ is given by the equation $y^2=x(x-1)(x+1)$. We can study
all the changes of variables which turn this equation into a
symmetric one, but we only obtain the curve given by $y^2=x^3+1$,
which is a singular curve. Therefore, there is no symmetric
polynomial $f(x)\in \F_3[x]$ such that the curve defined by
$y^2=f(x)$ is a supersingular elliptic curve. This is the reason why
we exclude the prime $\ell=3$ from our reasonings.
\end{rem}

In order to apply Theorem \ref{thm grupo formal dim 2} to the curves
provided by Proposition \ref{CurvaExplicita}, we need to check that
the exponent in Proposition \ref{exponente r} is $r=2$. Let us work
with the reductions of the Jacobians. First of all, note that this
property is preserved by  isogenies of degree prime to the
characteristic $\ell$.

\begin{lem}\label{separable isogeny} Let $A$ and $B$ be abelian varieties defined over $k$,
and $\Phi:B\rightarrow A$ an isogeny of degree prime to $\ell$.
Assume moreover that the formal group law attached to $B$ has $r=2$.
Then the formal group law attached to $A$ has $r=2$ too.
\end{lem}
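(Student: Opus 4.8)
The plan is to exploit the fact that an isogeny $\Phi:B\to A$ of degree prime to $\ell$ induces an isomorphism on the associated formal groups, so that the invariant $r$ — which by Proposition \ref{exponente r} is intrinsically attached to the multiplication-by-$\ell$ homomorphism of a formal group law — is transported from $B$ to $A$. First I would recall that the formal group law attached to an abelian variety over $k$ is obtained by completing along the identity section, and that $\Phi$, being a homomorphism of abelian varieties, induces a homomorphism of formal group laws $\widehat{\Phi}:\widehat{B}\to\widehat{A}$. Since $\deg\Phi$ is prime to $\ell=\mathrm{char}\,k$, the isogeny is \'etale, hence its kernel is an \'etale group scheme with trivial connected component; therefore $\widehat{\Phi}$ is an isomorphism of formal group laws (equivalently, the linear term of $\widehat{\Phi}$ is an invertible matrix over $k$, and a formal group law homomorphism with invertible linear part is invertible). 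Let $\widehat{\Psi}=\widehat{\Phi}^{-1}$ denote its inverse.

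Next I would translate the condition "$r=2$" into the language of Proposition \ref{exponente r} applied to $\overline{[\ell]}_B$ and $\overline{[\ell]}_A$: the integer $r$ for a formal group law $\overline{\mathbf{G}}$ is the largest $r$ such that every coordinate $\overline{[\ell]}_i(\mathbf{Z})$ lies in $k[[Z_1^{\ell^r},\dots,Z_n^{\ell^r}]]$, equivalently the $\ell$-adic valuation of the smallest exponent of any variable occurring in a nonzero monomial of any $\overline{[\ell]}_i$. The key compatibility is that $\widehat{\Phi}\circ\overline{[\ell]}_B=\overline{[\ell]}_A\circ\widehat{\Phi}$, since $[\ell]$ is natural, hence $\overline{[\ell]}_A=\widehat{\Phi}\circ\overline{[\ell]}_B\circ\widehat{\Psi}$. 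Now I would argue that substituting a power series in $\mathbf{Z}^{\ell^2}$ into another such power series again yields a power series in $\mathbf{Z}^{\ell^2}$ — this uses the Frobenius-type fact that in characteristic $\ell$, if $\overline{g}(\mathbf{Z})$ has no constant term then $\overline{g}(\mathbf{Z})^{\ell^2}$, and more generally any substitution of the $\overline{f}_i(\mathbf{Z}^{\ell^2})$ into a power series without constant term, still lies in $k[[Z_1^{\ell^2},\dots,Z_n^{\ell^2}]]$, because composition of power series commutes with raising every variable to the $\ell^2$-th power when that operation is a ring endomorphism. Since $\overline{[\ell]}_B$ has all coordinates in $k[[Z_1^{\ell^2},Z_2^{\ell^2}]]$ and both $\widehat{\Phi},\widehat{\Psi}$ have linear (hence arbitrary-exponent) terms, the composite $\overline{[\ell]}_A=\widehat{\Phi}\circ\overline{[\ell]}_B\circ\widehat{\Psi}$ still has all coordinates in $k[[Z_1^{\ell^2},Z_2^{\ell^2}]]$; so the exponent $r$ for $A$ is at least $2$. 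Applying the same reasoning to $\Psi=$ the dual (or a complementary) isogeny, whose degree is also prime to $\ell$, gives the reverse inequality, whence $r=2$ for $A$ exactly.

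The main obstacle is the direction of the substitution argument: one must be careful that composing with $\widehat{\Phi}$ and $\widehat{\Psi}$ on the \emph{outside} and \emph{inside} does not destroy membership in $k[[Z_1^{\ell^2},Z_2^{\ell^2}]]$, and in particular that the \emph{outer} substitution $\widehat{\Phi}\circ(\,\cdot\,)$ is harmless precisely because each coordinate of $\widehat{\Phi}$ is a power series without constant term and evaluating it at an argument already lying in $k[[Z_1^{\ell^2},Z_2^{\ell^2}]]$ keeps us inside that subring, which is itself a ring closed under the relevant operations. A clean way to package this, which I would adopt, is to observe that $k[[Z_1^{\ell^2},Z_2^{\ell^2}]]$ is exactly the image of the injective $k$-algebra endomorphism $\phi$ of $k[[Z_1,Z_2]]$ sending $Z_i\mapsto Z_i^{\ell^2}$ (the square of the relative Frobenius), that $\phi$ commutes with substitution of power-series tuples without constant term, and that $\overline{[\ell]}_B = \phi(\mathbf{g})$ componentwise for some tuple $\mathbf{g}$ by hypothesis; then $\overline{[\ell]}_A = \widehat{\Phi}\circ\phi(\mathbf{g})\circ\widehat{\Psi}$, and pushing $\phi$ outward using its compatibility with composition together with the fact that $\widehat{\Phi},\widehat{\Psi}$ are themselves in the image of $\phi$ up to their linear parts — no, more simply: $\widehat{\Phi}\circ\phi(\mathbf g)\circ\widehat{\Psi}=\phi\!\left(\phi^{-1}(\widehat{\Phi})\circ\mathbf g\circ\phi^{-1}(\widehat{\Psi})\right)$ need not make sense since $\widehat\Phi$ is not in the image of $\phi$, so instead I would verify directly at the level of monomials that every monomial of $\overline{[\ell]}_A$ has all exponents divisible by $\ell^2$, which follows monomial-by-monomial from the same divisibility for $\overline{[\ell]}_B$ after noting the linear parts of $\widehat\Phi,\widehat\Psi$ contribute factors of degree one in each new variable and hence cannot lower the $\ell$-adic valuation of exponents below what a genuine substitution would give — this last point is the delicate bookkeeping and is where I would spend the most care.
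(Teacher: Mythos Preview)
Your strategy is correct and closely parallels the paper's own argument: both proofs reduce to showing that the subring $k[[Z_1^{\ell^2},Z_2^{\ell^2}]]$ is preserved under pre- and post-composition with formal power series without constant term, and then transport the property ``$r=2$'' along the isogeny. The paper does not invoke the fact that an \'etale isogeny induces an isomorphism on formal groups; instead it uses the dual isogeny $\Psi:A\to B$ with $\Phi\circ\Psi=\overline{[m]}_A$ (where $m=\deg\Phi$), obtaining $\Phi\circ\overline{[\ell]}_B\circ\Psi=\overline{[\ell]}_A\circ\overline{[m]}_A$, and then argues from the invertibility of the linear part of $\overline{[m]}_A$. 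Your route via $\widehat{\Phi}^{-1}$ is slightly more direct and conceptually cleaner; the paper's route is more elementary in that it only uses the existence of the dual isogeny rather than the \'etaleness criterion for formal isomorphisms.

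Where your write-up stumbles is the final paragraph: you circle around the substitution step without landing on the clean statement, and the last sentence about ``linear parts contributing factors of degree one'' is not the right mechanism. The point is simply this. Writing $\overline{[\ell]}_B(\mathbf{Z})=\mathbf{g}(Z_1^{\ell^2},Z_2^{\ell^2})$, the inner composition gives
\[
\overline{[\ell]}_B\bigl(\widehat{\Psi}(\mathbf{Z})\bigr)=\mathbf{g}\bigl(\widehat{\Psi}_1(\mathbf{Z})^{\ell^2},\,\widehat{\Psi}_2(\mathbf{Z})^{\ell^2}\bigr),
\]
and since $\mathrm{char}\,k=\ell$, the Frobenius gives $\widehat{\Psi}_i(\mathbf{Z})^{\ell^2}\in k[[Z_1^{\ell^2},Z_2^{\ell^2}]]$ (every coefficient gets raised to the $\ell^2$-th power, every exponent gets multiplied by $\ell^2$). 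So $\overline{[\ell]}_B\circ\widehat{\Psi}$ already lies componentwise in the subring $k[[Z_1^{\ell^2},Z_2^{\ell^2}]]$, and outer composition with $\widehat{\Phi}$ then trivially preserves this since the subring is closed under substitution. That is the whole argument; no monomial-by-monomial bookkeeping is needed, and the linear parts of $\widehat{\Phi},\widehat{\Psi}$ play no special role. Your attempt to write $\widehat{\Phi}\circ\phi(\mathbf g)\circ\widehat{\Psi}=\phi(\cdots)$ fails exactly because you tried to pull $\phi$ past $\widehat{\Phi}$ on the outside rather than past $\widehat{\Psi}$ on the inside, where Frobenius does the work.

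Finally, your remark about needing the reverse inequality (via the dual isogeny) to pin down $r=2$ rather than merely $r\geq 2$ is a fair point; the paper does not address it explicitly, relying implicitly on the ambient hypothesis that $A$ has height $4$, which by Remark~\ref{relacion alturas} forces $r\leq 2$.
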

\begin{proof} Let $m$ be the degree of $\Phi$. We know
that there exists an isogeny $\Psi:A\rightarrow B$ (the dual isogeny
of $\Phi$) such that $\Psi\circ \Phi=\overline{[m]}_B$.

Consider the following commutative diagram:
\begin{equation*}\xymatrix{B\ar[r]^{\overline{[\ell]}_B} \ar[d]^{\Phi} & B\ar[d]^{\Phi}\\
                           A\ar[r]^{\overline{[\ell]}_A} & A}\end{equation*}
Since $\Phi\circ\overline{[\ell]}_B=\overline{[\ell]}_A\circ\Phi$,
$\Phi\circ\overline{[\ell]}_B\circ \Psi=
\overline{[\ell]}_A\circ\Phi\circ\Psi$; and thus
$\Phi\circ\overline{[\ell]}_B\circ \Psi=\overline{[\ell]}_A\circ
\overline{[m]}_A$.

Consider now the homomorphism these arrows induce on the formal
group laws on $A$ and $B$ (we will not change their names). Since
$\overline{[\ell]}_B$ modulo $\ell$ can be expressed by means of
formal power series in $Z_1^{\ell^2}, Z_2^{\ell^2}$, the same is
true of the composition $\Phi\circ
\overline{[\ell]}_B\circ\Psi=\overline{[\ell]}_A\circ\overline{[m]}_A$.
But since the multiplication by $m$ map in the formal group law of
$A$ is defined by
\begin{equation*}\begin{cases}\overline{[m]}_1(Z_1, Z_2)=m Z_1 + \cdots\\
\overline{[m]}_2(Z_1, Z_2)=m Z_2 + \cdots
\end{cases}\end{equation*}
neither of the formal power series that define $\overline{[\ell]}_A$
can possess a term of degree smaller than $\ell^2$ (for $m$ is
invertible in $\F_{\ell}$). Taking into account Proposition
\ref{exponente r}, we conclude that the multiplication by $\ell$ map
in $A$ must also be expressible as a formal power series in
$Z_1^{\ell^2}, Z_2^{\ell^2}$.
\end{proof}

We will now see that the natural isogeny from $E\times E$ to the
Jacobian of $C$ (cf. \cite{Prolegomena}, Chapter 14) satisfies the
conditions of the lemma above. We will make use of the following
result (cf. Proposition 3 of \cite{HoweLeprevostPoonen}).

\begin{prop}\label{Prop3}
Let $E$ and $F$ be two elliptic curves over $\F_{\ell}$, let $A$ be
the polarized abelian surface $E\times F$, and let $G\subset
A[2](\overline{\F}_{\ell})$ be the graph of a group isomorphism
$\psi:E[2](\overline{\F}_{\ell})\rightarrow
F[2](\overline{\F}_{\ell})$. Then $G$ is a maximal isotropic
subgroup of $A[2](\overline{\F}_{\ell})$, and furthermore the
quotient polarized abelian variety $A/G$ is isomorphic to the
Jacobian of a curve $C$ over $\overline{\F}_{\ell}$, unless $\psi$
is the restriction to $E[2](\overline{\F}_{\ell})$ of an isomorphism
$E\rightarrow F$ over $\overline{\F}_{\ell}$. Moreover, the curve
$C$ and the isomorphisms are defined over $\F_{\ell}$ if $\psi$ is
an isomorphism of $\Gal(\overline{\F}_{\ell}/\F_{\ell})$-modules.
\end{prop}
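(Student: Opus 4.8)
The plan is to combine the structure theory of principally polarized abelian surfaces over $\overline{\F}_\ell$ with a polarization-descent argument. Write $\lambda_A=\lambda_E\times\lambda_F$ for the product principal polarization on $A=E\times F$, and recall that under the identification $A[2]=E[2]\times F[2]$ the associated Weil pairing factors as $e_2^A=e_2^E\cdot e_2^F$. Since the Weil pairing on the $2$-torsion of an elliptic curve is the unique non-degenerate alternating $\F_2$-bilinear form on a two-dimensional $\F_2$-vector space and $\GL_2(\F_2)=\mathrm{Sp}_2(\F_2)$, every group isomorphism $\psi\colon E[2]\to F[2]$ automatically satisfies $e_2^F(\psi P,\psi Q)=e_2^E(P,Q)$. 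Therefore $e_2^A\big((P,\psi P),(Q,\psi Q)\big)=e_2^E(P,Q)\,e_2^F(\psi P,\psi Q)=e_2^E(P,Q)^2=1$, so $G$ is isotropic; as $|G|=4=2^{\dim A}$ it is maximal isotropic. By the standard descent theory for polarizations, $2\lambda_A$ then descends along $\pi\colon A\to B:=A/G$ to a principal polarization $\mu$ on $B$ with $\pi^\ast\mu=2\lambda_A$.

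Next, over $\overline{\F}_\ell$ a principally polarized abelian surface is either the Jacobian of a smooth projective genus $2$ curve or a polarized product of two elliptic curves (a theorem of Weil), so it remains to show that $(B,\mu)$ is a polarized product exactly when $\psi=h|_{E[2]}$ for some isomorphism $h\colon E\to F$. If such an $h$ exists, transport the situation by $\mathrm{id}\times h^{-1}$ to reduce to $A=E\times E$ with $G$ the diagonal copy of $E[2]$; the isogeny $(x,y)\mapsto(x+y,x-y)$ has kernel precisely this diagonal and carries the product polarization to twice itself, so $(B,\mu)$ is the polarized product $(E\times E,\lambda_E\times\lambda_E)$, which is not the Jacobian of a smooth curve. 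Conversely, assume $(B,\mu)=(E_1,\lambda_1)\times(E_2,\lambda_2)$ and pull the factor $E_1\subset B$ back: the identity component $A_1$ of $\pi^{-1}(E_1)$ is an elliptic curve, $d_1:=\deg(\pi|_{A_1})=|A_1\cap G|\in\{1,2,4\}$, and comparing degrees in $(\pi|_{A_1})^\ast(\mu|_{E_1})=(2\lambda_A)|_{A_1}$ forces $\deg(\lambda_A|_{A_1})=d_1^{2}/4$. This excludes $d_1=1$; moreover the only elliptic curves inside $E\times F$ on which the product polarization restricts to a principal polarization are $E\times 0$ and $0\times F$ (a short computation with the restricted polarization), and both of these meet $G$ trivially, which excludes $d_1=2$. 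Hence $d_1=4$, i.e.\ $G\subseteq A_1$; writing $A_1$ as the image in $E\times F$ of an elliptic curve under a pair of isogenies $(\varphi,\varphi')$, the identity $\deg(\lambda_A|_{A_1})=4=(\deg\varphi+\deg\varphi')^2$ forces $\deg\varphi=\deg\varphi'=1$, so $A_1$ is the graph of an isomorphism $h\colon E\to F$ and $G\subseteq A_1$ gives $\psi=h|_{E[2]}$.

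For the rationality statement, if $\psi$ commutes with $\Gal(\overline{\F}_\ell/\F_\ell)$ then $G$ is Galois-stable, so $\pi$, $B$ and $\mu$ all descend to $\F_\ell$; invoking the Torelli theorem in dimension $2$ --- a principally polarized abelian surface over a field $K$ that becomes, over $\overline{K}$, the Jacobian of a smooth genus $2$ curve is already $K$-isomorphic to such a Jacobian --- then provides $C$ and the claimed isomorphisms over $\F_\ell$.

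I expect the main obstacle to be the converse half of the second step: classifying all elliptic subvarieties of $E\times F$ together with the degree of the product polarization restricted to each, and tracking how each meets $G$. A second delicate point is the rational Torelli input in the last step, namely the absence of a twisting obstruction in dimension $2$.
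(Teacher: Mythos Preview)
The paper does not prove this proposition; it is quoted, with attribution, as Proposition~3 of Howe--Lepr\'evost--Poonen \cite{HoweLeprevostPoonen}, so there is no in-paper argument to compare against. Your proof is correct and essentially reconstructs the standard argument behind that reference: the Weil-pairing check that $G$ is maximal isotropic (using $\GL_2(\F_2)=\mathrm{Sp}_2(\F_2)$), descent of $2\lambda_A$ to a principal polarization on $A/G$, Weil's dichotomy for principally polarized abelian surfaces, and the case analysis for when the quotient is a polarized product.

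The two points you flag as delicate are exactly the substantive ones, and you handle them correctly. In the converse step, the key formula $\deg(\lambda_A|_{A_1})=(\deg\varphi+\deg\varphi')^{2}$ (obtained by identifying $i^{*}\lambda_A$ with $[\deg\varphi+\deg\varphi']$ via the principal polarization on $A_1$) drives the whole trichotomy $d_1\in\{1,2,4\}$; your observation that $E\times 0$ and $0\times F$ meet $G$ only in the origin, because $\psi$ is an isomorphism, is precisely what eliminates $d_1=2$. The isogeny $(x,y)\mapsto(x+y,x-y)$ used in the direct implication needs $\ell\neq 2$, which is harmless here. For the rationality clause, the Torelli input you invoke (no descent obstruction in genus~$2$, since every genus~$2$ curve carries the hyperelliptic involution) is the right one and suffices.
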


Let us consider the elliptic curve $E$ defined by the Weierstrass
equation $y^2=x^3 + bx^2 + bx+1$. The  $2$-torsion points of $E$ are
the following:
\begin{align*}O&\ \\
                P_1&:=(-1, 0)\\
                P_2&:=(\frac{1}{2}(1 - b + \sqrt{-3 - 2 b + b^2}), 0)\\
                P_3&:=(\frac{1}{2}(1 - b - \sqrt{-3 - 2 b + b^2}), 0).\\
\end{align*}

Let us consider the group morphism
$\psi:E[2](\overline{\F}_{\ell})\rightarrow
E[2](\overline{\F}_{\ell})$ defined as
\begin{equation*}O\mapsto O, \hskip 0.2cm P_1\mapsto P_1, \hskip
0.2cm P_2\mapsto P_3, \hskip 0.2cm P_3\mapsto P_2.\end{equation*}

Note that it is compatible with the action of
$\Gal(\overline{\F}_{\ell}/\F_{\ell})$. In order to apply
Proposition \ref{Prop3}, we need to check that $\psi$ is not induced
from an automorphism of $E$.

But the group of automorphisms  of $E$ is well known (cf.
\cite{Silverman}, Chapter III, $\S$ 10). Namely, if $E$ is an
elliptic curve with $j$-invariant different from $0$ or $1728$ (that
is to say, with $b$ different from $0$ or $-3/2$), then the group of
automorphisms of $E$ has order $2$, and the non-trivial automorphism
corresponds to $(x, y)\mapsto(x, -y)$. Therefore, it cannot restrict
to the morphism $\psi$. In the other cases, the order of $\Aut(E)$
is $4$ or $6$: it is easy to compute these automorphisms explicitly
and check that they cannot restrict to $\psi$.

Therefore, for each $b\in \F_{\ell}$ such that the equation $y^2=x^3
+ bx^2 + bx + 1$ defines an elliptic curve $E$ (i.e., $b\not=3,
-1$), Proposition \ref{Prop3} tells us that there exists a genus $2$
curve $C$ and an isogeny
\begin{equation*}\Phi:E\times E\rightarrow J(C)\end{equation*}
which is separable (because of the definition of the quotient of
abelian varieties, cf. $\S$ 7 Chapter 2, Theorem on p. 66 of
\cite{Mumford}) of degree 4. Moreover, the isogeny can be defined
over $\F_{\ell}$. Therefore, if $E$ is a supersingular elliptic
curve we can apply Lemma \ref{separable isogeny} and conclude that
the Jacobian of $C$ satisfies that the exponent in Proposition
\ref{exponente r} is $2$. But can $C$ be explicitly determined?
Fortunately, Proposition 4 of \cite{HoweLeprevostPoonen} gives a
very explicit recipe for computing $C$. As a conclusion, we can
state the following result.

\begin{prop}\label{prop r igual a 2} Let $b\in \F_{\ell}$ be such that the Weierstrass
equation $y^2=x^3 + bx^2 + bx + 1$ defines a supersingular elliptic
curve over $\F_{\ell}$. Then the formal group law attached to the
Jacobian of the genus $2$ curve $C$ defined by a lifting of the
hyperelliptic equation
\begin{equation*}y^2=x^6 + bx^4 + bx^2 + 1\end{equation*}
has exponent $r=2$.
\end{prop}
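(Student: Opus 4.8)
The plan is to assemble the pieces already established in the excerpt and feed them into Lemma \ref{separable isogeny}. First I would observe that since $b\in \F_{\ell}$ is such that $y^2 = x^3 + bx^2 + bx + 1$ defines a supersingular elliptic curve $E$, the condition $b\neq 3, -1$ is automatic (otherwise the cubic has a repeated root and the curve is singular, contradicting that it is an elliptic curve). Therefore the discussion preceding this proposition applies: the group morphism $\psi: E[2](\overline{\F}_\ell)\to E[2](\overline{\F}_\ell)$ swapping $P_2$ and $P_3$ and fixing $O, P_1$ is $\Gal(\overline{\F}_\ell/\F_\ell)$-equivariant (since the only non-rational feature of the $2$-torsion is the square root $\sqrt{-3-2b+b^2}$, which $\psi$ negates consistently), and it is not the restriction of any automorphism of $E$, as was checked case by case on $\Aut(E)$ according to the $j$-invariant. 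Hence Proposition \ref{Prop3}, applied with $F = E$, produces a genus $2$ curve $C$ over $\F_\ell$ together with a separable isogeny $\Phi: E\times E \to J(C)$ of degree $4$, defined over $\F_\ell$.

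Next I would invoke the supersingularity hypothesis on $E$. Since $E$ is supersingular, its formal group law has height $2$, so $\overline{[\ell]}_E(Z) = \overline{g}(Z^{\ell^2})$ for some power series $\overline{g}$; equivalently, the exponent $r$ of Proposition \ref{exponente r} for $E$ is $2$. The formal group law of $E\times E$ is the ``product'' formal group law whose multiplication-by-$\ell$ map is simply $\overline{[\ell]}_E$ applied in each of the two coordinates, so it also has $r = 2$: every variable occurring in a non-zero monomial of $\overline{[\ell]}_{E\times E}$ is raised to a power divisible by $\ell^2$, and $\ell^2$ is the smallest such. Thus $E\times E$ is an abelian surface over $\F_\ell$ whose attached formal group law has $r = 2$.

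Finally, I would apply Lemma \ref{separable isogeny} with $B = E\times E$ and $A = J(C)$, using the isogeny $\Phi$, whose degree $4$ is prime to $\ell$ because $\ell > 3$. The lemma then yields that the formal group law attached to $J(C)$ has $r = 2$. To conclude, I would identify $C$ with the bielliptic curve $y^2 = x^6 + bx^4 + bx^2 + 1$: by the remarks after Proposition \ref{Prop3}, the curve $C$ produced by the quotient construction is precisely the one computed by the explicit recipe of Proposition 4 of \cite{HoweLeprevostPoonen}, which in this symmetric situation returns (up to isomorphism over $\F_\ell$, which does not affect the formal group law) the hyperelliptic model $y^2 = x^6 + bx^4 + bx^2 + 1$; since $\Delta_f = -64\Delta_g \neq 0$ and $\mathrm{char}\,\F_\ell \neq 2$, this is genuinely a genus $2$ curve. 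Any lifting of this equation to $\Q_\ell$ has a Jacobian whose formal group law reduces to that of $J(C)$, and $r$ is defined via the reduction, so the lifted Jacobian also has $r = 2$.

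The main obstacle I anticipate is the identification in the last step: verifying that the curve $C$ abstractly produced by Proposition \ref{Prop3} really is the bielliptic model $y^2 = x^6 + bx^4 + bx^2 + 1$, rather than some other curve isogenous to $E\times E$. This requires matching the $2$-torsion gluing data encoded by $\psi$ against the classical description of $J(C)$ for a bielliptic curve (as in \cite{Prolegomena}, Chapter 14) and tracing it through the explicit formulas of \cite{HoweLeprevostPoonen}. Everything else is a routine chaining of the quoted results.
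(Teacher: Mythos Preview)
Your proposal is correct and follows essentially the same route as the paper: the argument for Proposition~\ref{prop r igual a 2} is precisely the discussion preceding its statement, assembling Lemma~\ref{separable isogeny}, Proposition~\ref{Prop3} with the explicit gluing $\psi$, the fact that a supersingular $E$ (hence $E\times E$) has $r=2$, and the identification of $C$ via Proposition~4 of \cite{HoweLeprevostPoonen}. The obstacle you flag---checking that the explicit recipe really returns the bielliptic model $y^2=x^6+bx^4+bx^2+1$---is exactly the point the paper also leaves to that citation rather than spelling out.
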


This provides us with all the ingredients to give a family of genus
$2$ curves such that the action of the wild inertia group on the
$\ell$-torsion points of their Jacobians is trivial.

\begin{thm}\label{PrimeraFamilia} Let $\ell>3$ be a prime number. Let $\overline{a}\in\F_{\ell}$ be such that $x^2 - x + \overline{a}$ divides the Deuring polynomial $H_{\ell}(x)$, and lift it to $a\in \Z_{\ell}$. Let $f_0, f_1, f_2, f_3\in \Z_{\ell}$ such that $f_0- 1, f_1, f_2- (1-a)/a, f_3\in (\ell)$. Then the equation
$y^2= f_0x^6 + f_1 x^5 + f_2 x^ 4 + f_3 x^3 + f_2 x^2 + f_1 x +
f_0\in \Z_{\ell}[x]$ defines a genus $2$ curve $C$ such that the
Galois extension $\Q_{\ell}(J(C))/\Q_{\ell}$ is tamely ramified.

\end{thm}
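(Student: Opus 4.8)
The plan is to verify that the curve $C$ defined by the given sextic satisfies all the hypotheses of Theorem~\ref{thm grupo formal dim 2}, together with the standard facts relating tame ramification of $\Q_{\ell}(J(C))/\Q_{\ell}$ to the triviality of the wild inertia action, and then invoke that theorem. First I would check that $C$ is a genuine genus $2$ curve: the polynomial $f_0x^6 + f_1x^5 + f_2x^4 + f_3x^3 + f_2x^2 + f_1x + f_0$ has degree $6$ (since $f_0\equiv 1\pmod{\ell}$, in particular $f_0\neq 0$) and non-zero discriminant. For the latter, I would argue by reduction modulo $\ell$: the reduction is $\overline{f}(x)=x^6 + 0\cdot x^5 + \tfrac{1-\overline{a}}{\overline{a}}x^4 + 0\cdot x^3 + \tfrac{1-\overline{a}}{\overline{a}}x^2 + 0\cdot x + 1 = x^6 + bx^4 + bx^2 + 1$ with $b=(1-\overline{a})/\overline{a}$, and by the computation preceding Proposition~\ref{CurvaExplicita} its discriminant is $-64\Delta_g$ where $\Delta_g$ is the discriminant of $x^3+bx^2+bx+1$, which is non-zero by Proposition~\ref{CurvaExplicita}; since $\ell>3$ does not divide $64$, the discriminant of $\overline{f}$ is non-zero in $\F_{\ell}$, hence the discriminant of $f$ is a unit in $\Z_{\ell}$ and in particular non-zero. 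This simultaneously shows $C$ has good reduction at $\ell$ (the model is smooth over $\Z_{\ell}$) and that $J(C)$ has good reduction, with reduction the Jacobian of the curve $\overline{C}: y^2=\overline{f}(x)$.

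Next I would identify the reduction as supersingular. By Proposition~\ref{CurvaExplicita}, $y^2=x^3+bx^2+bx+1$ is a supersingular elliptic curve $E/\F_{\ell}$; the bielliptic curve $\overline{C}: y^2=x^6+bx^4+bx^2+1$ is isogenous (over $\F_{\ell}$, via the two degree-$2$ maps to $E$ and its twist, or via the argument in Chapter~14 of \cite{Prolegomena}) to a product $E\times E'$ of supersingular elliptic curves, so $J(\overline{C})$ is a supersingular abelian surface. Consequently the formal group law $\mathbf{F}$ attached to $J(C)$ (over $\Z_{\ell}$) reduces to a formal group law of height $4$ over $\F_{\ell}$, by \cite{p-divisibleGroups}. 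Moreover the coefficient symmetry $f_2$ in positions $x^4,x^2$ and $f_1$ in positions $x^5,x$ and $f_0$ in positions $x^6,x^0$ is exactly the symmetry in Definition~\ref{def curva simetrica}, so Theorem~\ref{corolario simetria} applies and tells us $\mathbf{F}$ is a symmetric formal group law in the sense of Definition~\ref{def grupo formal symm}. Finally, Proposition~\ref{prop r igual a 2} — applied to the supersingular curve $E: y^2=x^3+bx^2+bx+1$ produced by Proposition~\ref{CurvaExplicita} — shows that the formal group law attached to the Jacobian of the reduction $y^2=x^6+bx^4+bx^2+1$ has exponent $r=2$; since $r$ is a property of the reduction mod $\ell$, the formal group law $\mathbf{F}$ over $\Z_{\ell}$ also has $r=2$.

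With symmetry, height $4$, and $r=2$ in hand, Theorem~\ref{thm grupo formal dim 2} gives that $I_{\wild}$ acts trivially on the $\F_{\ell}$-vector space $V$ of $\ell$-torsion points of $\mathbf{F}(\overline{\fm})$. The last step is to translate this into tameness of $\Q_{\ell}(J(C))/\Q_{\ell}$. Here one uses that, since $J(C)$ has good reduction at $\ell$, the formal group gives a $\Gal(\overline{\Q}_{\ell}/\Q_{\ell})$-equivariant identification of $V$ with the kernel of reduction in $J(C)[\ell]$, and this kernel equals the full $\ell$-torsion $J(C)[\ell]$ because the reduction is supersingular (the reduction map $J(C)[\ell]\to \overline{J}[\ell](\overline{\F}_{\ell})$ is zero, as a supersingular abelian surface has no $\ell$-torsion over $\overline{\F}_{\ell}$). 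Hence $I_{\wild}$ acts trivially on $J(C)[\ell]$, i.e. the fixed field of $I_{\wild}$ contains $\Q_{\ell}(J(C)[\ell])=\Q_{\ell}(J(C))$, which is exactly the statement that this extension is tamely ramified.

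The main obstacle is the bookkeeping of the last paragraph — namely checking carefully that for a supersingular abelian surface with good reduction the entire $\ell$-torsion lies in the formal group (so that Theorem~\ref{thm grupo formal dim 2}, which only controls $V$, actually controls $J(C)[\ell]$), and that "good reduction $+$ $I_{\wild}$ trivial on $J(C)[\ell]$" is equivalent to tame ramification of the extension it cuts out; both are standard but must be stated, perhaps with a reference to \cite{Proprietesgaloisiennes}. The rest is a matter of assembling Proposition~\ref{CurvaExplicita}, Theorem~\ref{corolario simetria}, Proposition~\ref{prop r igual a 2}, and Theorem~\ref{thm grupo formal dim 2}, together with the elementary discriminant computation showing $C$ has good reduction.
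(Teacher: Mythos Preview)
Your proposal is correct and follows essentially the same route as the paper: the theorem is stated without a separate proof, as the immediate consequence of assembling Proposition~\ref{CurvaExplicita}, the bielliptic construction, Theorem~\ref{corolario simetria}, the height-$4$ fact from \cite{p-divisibleGroups}, Proposition~\ref{prop r igual a 2}, and Theorem~\ref{thm grupo formal dim 2}. Your write-up is in fact more explicit than the paper on two points it leaves tacit --- the discriminant check giving good reduction, and the identification of $V$ with all of $J(C)[\ell]$ via supersingularity --- so there is nothing to correct.
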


\section{Approximation to symmetry}\label{Approximation}

The results in the previous section provide, for each $\ell>3$, a
symmetric genus $2$ curve with good supersingular reduction such
that its formal group law satisfies the hypotheses of Theorem
\ref{thm grupo formal dim 2}, and in consequence also the hypotheses
of Theorem \ref{resultado general}. But one might argue that these
curves are not a good example to illustrate Theorem \ref{resultado
general}, in the sense that they are actually isogenous over
$\Q_{\ell}$ to a product of elliptic curves with good supersingular
reduction, and surely one can prove in a more direct fashion that
the wild inertia group at $\ell$ acts trivially. Our aim now is to
enlarge this class of curves, and provide other more complicated
examples in which Theorem \ref{resultado general} applies. The key
idea is that we are going to take  curves which are ``approximately
symmetric'', that is to say, symmetric up to a certain order with
respect to the $\ell$-adic valuation. More specifically, we wish to
determine how close the coefficients of a hyperelliptic equation of
$C'$ must be to those of a hyperelliptic symmetric equation for the
condition in Hypothesis \ref{H} to be preserved. The main result of
this section is the following.

\begin{thm}\label{RelajarSimetria}
Let  $C$ be a genus $2$ curve given by a hyperelliptic equation
\begin{equation*}y^2=f_6 x^6 + f_5 x^5 + f_4 x^4 + f_3 x^3 +
f_2 x^2 + f_1 x + f_0,\end{equation*} where $f_0, \dots, f_6\in
\Z_{\ell}$, and consider the genus $2$ curve $C'/\Q_{\ell}$ given by
the equation
\begin{equation*}y^2=f_6' x^6 + f_5' x^5 + f_4' x^4 + f_3' x^3 +
f_2' x^2 + f_1' x + f_0'\end{equation*} with ${f_0'}, \dots, f_6'\in
\Z_{\ell}$ and satisfying $f_i-f_i'\in (\ell^4)$. Then if the formal
group law attached to the Jacobian of $C$ satisfies Hypothesis
\ref{H} with $\alpha=\frac{1}{\ell^2-1}$, so does the formal group
law attached to the Jacobian of $C'$.
\end{thm}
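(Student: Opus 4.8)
The plan is to reduce the statement to a comparison of the multiplication‑by‑$\ell$ power series of the two formal group laws, and then to transport $\ell$‑torsion points from $J(C')$ back to $J(C)$ by a Hensel‑type argument, exploiting that a perturbation of size $\ell^4$ in the curve coefficients is too small to disturb the valuations forced by Hypothesis~\ref{H}. First I would use that, for $\ell\neq 2$ and a hyperelliptic model $y^2=h(x)$ whose discriminant is an $\ell$‑adic unit, the Jacobian is an abelian scheme over $\Z_{\ell}[h_0,\dots,h_6,\Delta_h^{-1}]$, so the associated formal group law (the completion along the zero section) has coefficients in $\Z_{\ell}$ depending $\Z_{\ell}$‑integrally on the $h_i$ and on $\Delta_h^{-1}$. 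Since $f_i\equiv f_i'\pmod{\ell}$ the curves $C$ and $C'$ have the same reduction modulo $\ell$, so $C'$ also has good reduction and $\Delta_{f'}\equiv\Delta_f\pmod{\ell^4}$ is again a unit; feeding $f_i\equiv f_i'\pmod{\ell^4}$ into those integral expressions gives $\mathbf F\equiv\mathbf F'\pmod{\ell^4}$ coefficient‑wise for the two formal group laws. A congruence modulo $\ell^4$ between power series without constant term is preserved under substitution, so the recursion of Example~\ref{multiplication by m map} propagates it to $[\ell]_i\equiv[\ell]_i'\pmod{\ell^4}$, $i=1,2$. It then suffices to prove the abstract statement: if $\mathbf F,\mathbf F'$ are $2$‑dimensional formal group laws over $\Z_{\ell}$ with $[\ell]_i\equiv[\ell]_i'\pmod{\ell^4}$ and the $\ell$‑torsion $V$ of $\mathbf F(\overline{\fm})$ satisfies Hypothesis~\ref{H} with $\alpha=\tfrac1{\ell^2-1}$, then the $\ell$‑torsion $V'$ of $\mathbf F'(\overline{\fm})$ does too.

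The key observation is that, for \emph{any} formal group law over $\Z_{\ell}$, the Jacobian matrix $D[\ell](\mathbf Z)=(\partial[\ell]_i/\partial Z_j)$ is divisible by $\ell$, with $M(\mathbf Z):=\ell^{-1}D[\ell](\mathbf Z)$ satisfying $M(\mathbf 0)=I$: by Proposition~\ref{exponente r} (and $r\ge1$ for the map $[\ell]$) the reduction $\overline{[\ell]}_i$ is a power series in the $Z_j^{\ell^r}$, so modulo $\ell$ every monomial of $[\ell]_i$ has all exponents divisible by $\ell$, and differentiating such a monomial yields a coefficient in $(\ell)$; a monomial of $[\ell]_i$ whose coefficient is already in $(\ell)$ likewise has all derivatives in $(\ell)$; and $M(\mathbf 0)=I$ comes from the linear part $\ell Z_i$ of $[\ell]_i$. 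Hence for every $\mathbf z\in\overline{\fm}^{\times2}$ we get $M(\mathbf z)=I+(\text{matrix with entries of positive valuation})$, so $v(\det D[\ell](\mathbf z))=2$ and $D[\ell](\mathbf z)^{-1}$ has entries of valuation $\ge-1$. Now let $P=(x_0,y_0)\in V'$ be non‑zero and set $\beta=\min\{v(x_0),v(y_0)\}>0$. Writing $[\ell]_i-[\ell]_i'=\ell^4g_i$ with $g_i\in\Z_{\ell}[[Z_1,Z_2]]$ of order $\ge2$ (both series have linear part $\ell Z_i$), the identity $[\ell]'(P)=(0,0)$ gives $v([\ell]_i(P))=v(\ell^4g_i(P))\ge4+2\beta>4=2\,v(\det D[\ell](P))$. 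This inequality lets the Newton iteration $P_0=P$, $P_{k+1}=P_k-D[\ell](P_k)^{-1}[\ell](P_k)$ converge inside the henselian valuation ring $\Oh_{\overline{K}}$ (equivalently, multivariate Hensel's lemma applies): the iterates remain in $\overline{\fm}^{\times2}$, $v(P_{k+1}-P_k)\to\infty$, and the limit $P'=(x_0',y_0')$ satisfies $[\ell](P')=(0,0)$ with $v(P'-P)\ge v([\ell](P))-1\ge3+2\beta$.

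Since $v(P'-P)\ge3+2\beta>\beta$, we cannot have $P'=(0,0)$ (that would force $\min\{v(x_0),v(y_0)\}\ge3+2\beta>\beta$), so $P'$ is a non‑zero point of $V$; moreover the coordinate of $P$ attaining the minimum $\beta$ has the same valuation in $P'$, while the other coordinate of $P'$ has valuation $\ge\beta$, so $\min\{v(x_0'),v(y_0')\}=\beta$. But Hypothesis~\ref{H} for $\mathbf F$ says this minimum equals $\alpha=\tfrac1{\ell^2-1}$; therefore $\beta=\alpha$, i.e.\ $\min\{v(x_0),v(y_0)\}=\tfrac1{\ell^2-1}$, which is exactly Hypothesis~\ref{H} for $\mathbf F'$ (and then Theorem~\ref{resultado general} gives tameness of $\Q_{\ell}(J(C'))/\Q_{\ell}$). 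The step I expect to be the real obstacle is the first one --- establishing rigorously that the formal group law of the Jacobian depends $\ell$‑adically continuously on the coefficients of the hyperelliptic equation, so that an $\ell^4$‑congruence of coefficients forces an $\ell^4$‑congruence of the formal group laws; the remaining steps are routine, but it is worth noting that the exponent $4$ is sharp for this argument, since the error $\ell^4g_i(P)$, of valuation $\ge4+2\beta$, must beat $2\,v(\det D[\ell](P))=4$ for every admissible $\beta>0$, and with $\ell^3$ instead this would fail precisely for the small valuations (such as $\beta=\alpha$) that actually occur.
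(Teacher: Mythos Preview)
Your argument is correct and follows essentially the same route as the paper: transport a nonzero $\ell$-torsion point of $\mathbf F'$ to one of $\mathbf F$ via multivariate Hensel/Newton applied to $[\ell]$, using that $[\ell]_i\equiv[\ell]_i'\pmod{\ell^4}$ and that the Jacobian determinant of $[\ell]$ has valuation $2$, and then read off the minimum of the coordinate valuations. The paper quotes Bourbaki's version of the corollary (your Newton iteration) and argues the valuation comparison exactly as you do; your write-up is in fact more careful in two places where the paper is terse --- you justify that $v(\det D[\ell](\mathbf z))=2$ at every $\mathbf z\in\overline{\fm}^{\times2}$ (the paper only displays the Jacobian at the origin), and you explicitly rule out $P'=(0,0)$, which the paper leaves implicit.
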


The rest of the section is devoted to proving this result. Fix  a
genus $2$ curve $C/\Q_{\ell}$, given by a hyperelliptic equation
\begin{equation*}y^2=
f_6 x^6 + f_5 x^5 + f_4 x^4 + f_3 x^3 + f_2 x^2 + f_1 x +
f_0,\end{equation*} where $f_0, \dots, f_6\in \Z_{\ell}$, and
consider the genus $2$ curve $C'/\Q_{\ell}$ given by the equation
\begin{equation*}y^2=
f_6' x^6 + f_5' x^5 + f_4' x^4 + f_3' x^3 + f_2' x^2 + f_1' x +
f_0'\end{equation*} with ${f_0'}, \dots, f_6'\in \Z_{\ell}$.

Denote by $\mathbf{F}=(F_1, F_2)$ (resp. $\mathbf{F}'=(F_1', F_2')$)
the formal group law attached to $C$ (resp. $C'$). It can be proven
that the coefficients of $F_i$ (resp. $F_i'$) lie in $\Z[f_0, \dots,
f_6]$ (resp. $\Z[f_0', \dots, f_6']$), $i=1, 2$.

Therefore, if we assume that, for all $i=0, \dots, 6$, $f_i-f_i'\in
(\ell^s)$, then the difference $F_i(s_1, s_2, t_1, t_2)-F_i'(s_1,
s_2, t_1, t_2)$ has coefficients in $(\ell^s)$. Hence we may drop
the curves and work in the formal group setting, since all we have
to determine is the exponent $s$ which preserves Hypothesis \ref{H}.

Denote by $\overline{\Q}_{\ell}$ an algebraic closure of
$\Q_{\ell}$, and $\overline{\fm}\subset \overline{\Q}_{\ell}$ the
set of elements with positive valuation. If the coefficients of the
power series  $[\ell]_1(Z_1, Z_2)$, $[\ell]_2(Z_1, Z_2)$ are close
(with respect to the $\ell$-adic valuation) to the coefficients of
the series $[\ell]'_1(Z_1, Z_2), [\ell]'_2(Z_1, Z_2)$, does this
imply that the solutions of the system of equations $[\ell]_1(Z_1,
Z_2)=[\ell]_2(Z_1, Z_2)=0$ are close to the solutions of the system
of equations $[\ell]'_1(Z_1, Z_2)=[\ell]'_2(Z_1, Z_2)=0$?

A precise answer to this question can be found in \cite{Bourbaki},
chapter III, $\S$ 4, n$^{\circ}$ 5. The reasoning is carried out in
the context of restricted formal power series, but it can be adapted
to this setting.

Namely, let $A$ be a commutative ring, and fix an ideal $\fm$ of
$A$. Assume that $A$ is separable and complete with respect to the
$\fm$-adic topology. As usual, we will denote the tuples of elements
in boldface.

Consider a system of $n$ power series in $n$ variables,
\begin{equation*}\mathbf{f}=(f_1, \dots, f_n),\quad
f_i\in A[[X_1, \dots, X_n]].\end{equation*}

We will denote by $J_{\mathbf{f}}$ the determinant of the Jacobian
matrix, that is to say,
\begin{equation*} J_{\mathbf{f}}=
\mathrm{det}\begin{pmatrix}\frac{\partial f_1}{\partial X_1} & \cdots & \frac{\partial f_1}{X_n}\\
\cdots & \cdots & \cdots\\
\frac{\partial f_n}{\partial X_1} & \cdots & \frac{\partial
f_n}{X_n}\end{pmatrix}.\end{equation*}  By $\fm^{\times n}$ we shall
mean the cartesian product of $\fm$ with itself $n$ times. We will
say that two $n$-tuples $\mathbf{a}$ and $\mathbf{b}$ are congruent
modulo an ideal $I$ of $A$ if they are so coordinatewise, that is to
say, $a_i-b_i\in I$ for $i=1, \dots, n$. We will apply the following
result (cf. Corollary 1 in \cite{Bourbaki}, chapter III, $\S$ 4,
n$^{\circ}$ 5).

\begin{cor}\label{Above} Let $\mathbf{f}=(f_1, \dots, f_n)$ be a tuple of
elements in $A[[X_1, \dots, X_n]]$, and let $\mathbf{a}\in
\fm^{\times n}$. Call $e=J_{\mathbf{f}}(\mathbf{a})$. If
$\mathbf{f}(\mathbf{a})\equiv 0\mod e^2\fm$, then there exists
$\mathbf{b}\in \fm^{\times n}$ such that $\mathbf{f}(\mathbf{b})=0$
and $\mathbf{b}\equiv \mathbf{a}\mod e\fm$. Furthermore, assume that
there exists another tuple $\mathbf{b}'\in \fm^{\times n}$ such that
$\mathbf{f}(\mathbf{b}')=0$ and $\mathbf{b}'\equiv \mathbf{a} \mod
e\fm$. Then, if $A$ has no zero divisors, $\mathbf{b}=\mathbf{b}'$.
\end{cor}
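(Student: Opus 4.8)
The plan is to prove this by Newton's method (successive approximation), following the cited Bourbaki result but taking care that $A$ need be neither a domain nor have $e$ invertible — not even a non-zero-divisor. The only substitute for ``dividing by the Jacobian'' is the adjugate identity $\mathrm{adj}(M)\,M=M\,\mathrm{adj}(M)=\det(M)\,I$, valid over any commutative ring; every formal division by $e$ or by $J_{\mathbf f}$ below will be rerouted through it. If $e=0$, the hypothesis reads $\mathbf f(\mathbf a)=0$ and the congruence $\mathbf b\equiv\mathbf a\bmod e\fm$ reads $\mathbf b=\mathbf a$, so both assertions are immediate; hence assume $e\neq 0$. I would also record the purely formal ``Taylor with quadratic remainder'': for $g\in A[[\mathbf X]]$ and $\mathbf x,\mathbf h\in\fm^{\times n}$, grouping $g(\mathbf X+\mathbf H)\in A[[\mathbf X,\mathbf H]]$ by degree in $\mathbf H$ gives
\[
g(\mathbf x+\mathbf h)=g(\mathbf x)+\sum_{j=1}^{n}\tfrac{\partial g}{\partial X_j}(\mathbf x)\,h_j+R_g(\mathbf x,\mathbf h),
\]
where $R_g$ is a sum of monomials of degree $\geq 2$ in the $h_j$ (no division by integers occurs) and all series converge $\fm$-adically since the arguments lie in $\fm$. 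In particular $g(\mathbf x+\mathbf h)-g(\mathbf x)\in(h_1,\dots,h_n)$; applying this with $g=J_{\mathbf f}$ shows $J_{\mathbf f}(\mathbf x)\equiv J_{\mathbf f}(\mathbf a)=e\bmod e\fm$ whenever $\mathbf x\equiv\mathbf a\bmod e\fm$, so $J_{\mathbf f}(\mathbf x)=e(1+\mu)$ with $\mu\in\fm$ and $1+\mu$ a unit of $A$ (inverse $\sum_{m\geq 0}(-\mu)^m$, convergent by completeness).

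Then comes the iteration. Set $\mathbf a_0=\mathbf a$; assuming $\mathbf a_k\equiv\mathbf a\bmod e\fm$ and $\mathbf f(\mathbf a_k)=e^2\mathbf c_k$ for some $\mathbf c_k\in\fm^{k+1}$ (the case $k=0$ being the hypothesis), put $N_k=\mathrm{adj}(D\mathbf f(\mathbf a_k))$ and $\mathbf a_{k+1}=\mathbf a_k-e\,N_k\mathbf c_k$. Applying the Taylor expansion componentwise with $\mathbf h=-e\,N_k\mathbf c_k\in e\fm^{k+1}$ and using $D\mathbf f(\mathbf a_k)\,N_k=J_{\mathbf f}(\mathbf a_k)\,I$, the linear term cancels $\mathbf f(\mathbf a_k)=e^2\mathbf c_k$ up to the error $e\mathbf c_k\,(e-J_{\mathbf f}(\mathbf a_k))\in e^2\fm^{k+2}$, while the quadratic remainder lies in $e^2\fm^{2k+2}\subseteq e^2\fm^{k+2}$ (as $2k+2\geq k+2$). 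Hence $\mathbf f(\mathbf a_{k+1})\in e^2\fm^{k+2}$, i.e.\ $\mathbf f(\mathbf a_{k+1})=e^2\mathbf c_{k+1}$ with $\mathbf c_{k+1}\in\fm^{k+2}$, and $\mathbf a_{k+1}-\mathbf a_k\in e\fm^{k+1}\subseteq e\fm$, so the inductive hypotheses persist. The sequence $(\mathbf a_k)$ is $\fm$-adically Cauchy, so by completeness it converges to some $\mathbf b\in A^{\times n}$; since $\fm$ is open, hence closed, and each $\mathbf a_k\in\fm^{\times n}$, also $\mathbf b\in\fm^{\times n}$. Continuity of power series on $\fm^{\times n}$ gives $\mathbf f(\mathbf b)=\lim_k\mathbf f(\mathbf a_k)=\lim_k e^2\mathbf c_k=0$ (using that $A$ is separated), and telescoping, $\mathbf b-\mathbf a=-e\sum_{k\geq 0}N_k\mathbf c_k$, the series converging in the closed set $\fm^{\times n}$; thus $\mathbf b\equiv\mathbf a\bmod e\fm$. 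This settles existence.

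For uniqueness, assume $A$ has no zero divisors and let $\mathbf b'\in\fm^{\times n}$ satisfy $\mathbf f(\mathbf b')=0$ and $\mathbf b'\equiv\mathbf a\bmod e\fm$; then $\mathbf b-\mathbf b'=e\mathbf u$ with $\mathbf u\in\fm^{\times n}$. Expanding $0=\mathbf f(\mathbf b-e\mathbf u)$ by Taylor, using $\mathbf f(\mathbf b)=0$, and multiplying through by $\mathrm{adj}(D\mathbf f(\mathbf b))$ yields $e\,J_{\mathbf f}(\mathbf b)\,\mathbf u\in e^2\,(u_iu_j:i,j)$; since $\mathbf b\equiv\mathbf a\bmod e\fm$ we have $J_{\mathbf f}(\mathbf b)=e(1+\mu)$ with $1+\mu$ a unit, so cancelling $e^2$ (legitimate since $A$ is a domain and $e\neq 0$) gives $u_k\in\fm\,(u_1,\dots,u_n)$ for every $k$. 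Hence the ideal $U=(u_1,\dots,u_n)$ satisfies $U=\fm U$, whence $U\subseteq\bigcap_{m}\fm^m=0$ by separatedness and $\mathbf b=\mathbf b'$.

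The one genuinely delicate point throughout — and the main obstacle — is precisely that $e$ is merely an element of $A$, not a unit: every cancellation of $e$ must be justified (trivially when $e=0$, via the domain hypothesis in the uniqueness step, and avoided altogether in the existence step by systematically carrying an honest factor $e^2$), and the exponents of $\fm$ in the induction must be chosen generously enough — here $k+1$, exploiting $2(k+1)\geq k+2$ — that the persistent extra $e^2$ does not spoil $\fm$-adic convergence. Everything else (the formal Taylor identity, the adjugate identity, continuity of power series, and the Krull-type argument $U=\fm U\Rightarrow U=0$) is routine once the bookkeeping is set up this way.
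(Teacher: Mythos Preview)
Your proof is correct. Note, however, that the paper does not prove this corollary at all: it simply quotes it from Bourbaki (Alg\`ebre commutative, III, \S4, n$^{\circ}$5, Corollaire~1) and then applies it. What you have written is essentially a self-contained reconstruction of Bourbaki's argument---the Newton iteration driven by the adjugate identity $M\,\mathrm{adj}(M)=\det(M)\,I$, with the $\fm$-adic bookkeeping (carrying the factor $e^{2}$ explicitly through the induction, and the Krull-type step $U=\fm U\Rightarrow U=0$ for uniqueness) done carefully so that no illegitimate division by $e$ occurs. So there is no meaningful comparison to make with the paper's ``own'' proof; your write-up simply supplies what the paper outsources to the reference, and does so along the same lines as that reference.
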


Let us go back now to our approximation problem. We have two formal
group laws $\mathbf{F}$, $\mathbf{F}'$, defined over $\Z_{\ell}$. We
consider the two systems of equations
\begin{equation}\label{Sistemas aproximacion}\begin{cases}[\ell]_1(Z_1, Z_2)=0\\
                              [\ell]_2(Z_1, Z_2)=0\end{cases}  \text{ and }
\begin{cases}[\ell]'_1(Z_1, Z_2)=0\\
                              [\ell]'_2(Z_1, Z_2)=0\end{cases}
\end{equation}
where we know that for $i=1, 2$, it holds that
\begin{equation*}[\ell]_i(Z_1, Z_2) - [\ell]_i'(Z_1, Z_2)\in \ell^s\cdot
\Z_{\ell}[[Z_1, Z_2]].\end{equation*}

Furthermore, since the systems of equations \eqref{Sistemas
aproximacion} describe the $\ell$-torsion points of the Jacobians of
curves of genus $2$, the set of solutions in $\overline{\fm}^{\times
2}$ is finite. We may thus consider a finite extension $K\supset
\Q_{\ell}$ that contains all the coordinates of all the solutions of
the systems in \eqref{Sistemas aproximacion}. Let us denote by
$\Oh_K$ the ring of integers of $K$ and by $\fm$ its maximal ideal.
It is clear that $\Oh_K$ is separable and complete with respect to
the $\fm$-adic topology.

Let us call $V'$ the set of pairs $(x', y')\in\overline{\fm}\times
\overline{\fm}$ such that $[\ell]'_1(x', y')=[\ell]'_2(x', y')=0$.
Our first claim is the following:

\begin{lem} For all $(x', y')\in V'$, $[\ell]_1(x', y'),
[\ell]_2(x', y')\in \ell^s\fm$.
\end{lem}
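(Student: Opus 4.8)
The statement to prove is: for every $(x',y')\in V'$ (i.e. a solution of the primed system $[\ell]_1'(x',y')=[\ell]_2'(x',y')=0$), one has $[\ell]_1(x',y'),[\ell]_2(x',y')\in\ell^s\fm$. The strategy is simply to split each value $[\ell]_i(x',y')$ as $\bigl([\ell]_i(x',y')-[\ell]_i'(x',y')\bigr)+[\ell]_i'(x',y')$ and bound the two summands separately. The second summand vanishes by the defining property of $(x',y')\in V'$, so everything comes down to the first, which is the value at $(x',y')$ of the power series $[\ell]_i-[\ell]_i'\in\ell^s\Z_\ell[[Z_1,Z_2]]$.

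For the first summand, write $[\ell]_i(Z_1,Z_2)-[\ell]_i'(Z_1,Z_2)=\ell^s g_i(Z_1,Z_2)$ with $g_i\in\Z_\ell[[Z_1,Z_2]]$. Since $(x',y')\in\overline{\fm}\times\overline{\fm}$, both $x'$ and $y'$ lie in the maximal ideal $\fm$ of $\Oh_K$; as $\fm$ is an ideal and $g_i$ has coefficients in $\Z_\ell\subset\Oh_K$ and no constant term (recall $[\ell]_i$ and $[\ell]_i'$ are power series without constant term, being homomorphisms of formal groups), the value $g_i(x',y')$ converges in the complete ring $\Oh_K$ and lies in $\fm$. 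Hence $[\ell]_i(x',y')-[\ell]_i'(x',y')=\ell^s g_i(x',y')\in\ell^s\fm$. Combined with $[\ell]_i'(x',y')=0$ this gives $[\ell]_i(x',y')\in\ell^s\fm$ for $i=1,2$, as required.

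There is essentially no obstacle here: the lemma is a direct convergence-and-ideal bookkeeping argument, and the only point to be slightly careful about is that $g_i$ has no constant term (so that $g_i(x',y')\in\fm$ rather than merely in $\Oh_K$) — this is guaranteed because the multiplication-by-$\ell$ maps of the two formal group laws each have the form $\ell Z_i+(\text{higher order})$, so their difference has zero constant term as well. I would write this up in two or three sentences and move on.

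\begin{proof}
Fix $(x', y')\in V'$ and $i\in\{1,2\}$. By construction
$[\ell]_i(Z_1, Z_2)-[\ell]_i'(Z_1, Z_2)=\ell^s g_i(Z_1, Z_2)$ for some $g_i\in\Z_\ell[[Z_1, Z_2]]$, and since both $[\ell]_i$ and $[\ell]_i'$ are power series without constant term, so is $g_i$. As $x', y'\in\fm$ and $\Oh_K$ is complete with respect to the $\fm$-adic topology, the series $g_i(x', y')$ converges to an element of $\Oh_K$; moreover, $g_i$ having no constant term and coefficients in $\Z_\ell\subseteq\Oh_K$, each monomial of $g_i(x', y')$ lies in $\fm$, hence $g_i(x', y')\in\fm$. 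Therefore
\begin{equation*}
[\ell]_i(x', y')-[\ell]_i'(x', y')=\ell^s g_i(x', y')\in\ell^s\fm.
\end{equation*}
On the other hand $[\ell]_i'(x', y')=0$ because $(x', y')\in V'$. Adding the two identities yields $[\ell]_i(x', y')\in\ell^s\fm$ for $i=1, 2$.
\end{proof}
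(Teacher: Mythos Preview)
Your proof is correct and follows essentially the same approach as the paper: write $[\ell]_i(x',y')=[\ell]_i(x',y')-[\ell]_i'(x',y')$, note that the difference series has coefficients in $(\ell^s)$ and no constant term, and evaluate at $(x',y')\in\fm\times\fm$ to land in $\ell^s\fm$. The only cosmetic difference is that you factor out $\ell^s$ to name the quotient series $g_i$, whereas the paper works directly with the coefficient differences $a_{ij}-a'_{ij}$.
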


\begin{proof} Since $[\ell]'_1(x', y')=0$, we can write \begin{equation*}[\ell]_1(x',
y')=[\ell]_1(x', y')- [\ell]'_1(x', y').\end{equation*}

Furthermore, let us express $$[\ell]_1(x, y)=\sum_{ij}a_{ij}x^i y^j
\text{ and } [\ell]'_1(x, y)=\sum_{ij}a'_{ij}x^i y^j.$$ Hence
$[\ell]_1(x', y')=\sum_{ij}(a_{ij}-a'_{ij}) {x'}^i {y'}^j$. We know
that $a_{ij}- a'_{ij}\in (\ell^s)$, and ${x'}, {y'}\in \fm$, and
also that $[\ell]_1(x, y)$ is a power series without constant term;
thus it follows that $[\ell]_1({x'}, {y'})\in \ell^s \fm$. A similar
reasoning shows that $[\ell]_2({x'}, {y'})\in \ell^s \fm$.
\end{proof}

In order to apply Corollary \ref{Above} to the system of equations
$[\ell]_1(Z_1, Z_2)=$ $[\ell]_2(Z_1, Z_2)=0$, we need to compute the
determinant of the Jacobian matrix
$e=\mathrm{det}\begin{pmatrix}\ell & 0 \\ 0 &
\ell\end{pmatrix}=\ell^2$. This suggests that we should choose
$s=4$.

\begin{proof}[Proof of Theorem \ref{RelajarSimetria}]
Take $(x', y')\in \overline{\fm}^{\times 2}$ satisfying the
equations $$[\ell]'_1(x', y')=[\ell]'_2(x', y')=0.$$ We know that
$[\ell]_1(x', y'), [\ell]_2(x', y')\in \ell^4\cdot\fm$. Hence there
exists a unique $(x, y)\in \overline{\fm}^{\times 2}$ such that
$[\ell]_1(x, y)=[\ell]_2(x, y)=0$ and furthermore
\begin{equation*}\begin{cases}{x'}\equiv x \mod \ell^2 \fm\\
             {y'}\equiv y \mod \ell^2 \fm.\end{cases}\end{equation*}
In particular, the two conditions $v({x'}-x)\geq 2$, $v({y'}-y)\geq
2$ are satisfied.

But $(x, y)$ is a point of $\ell$-torsion of the Jacobian of $C$,
and therefore we know that
\begin{equation*}\min\{v(x), v(y)\}=\alpha=\frac{1}{\ell^2-1}.\end{equation*}

But if $v(x)=\alpha$ and $v({x'}-x)\geq 2>\alpha$, then it follows
that $v({x'})=\alpha$. And similarly, if $v(y)=\alpha$, then
$v({y'})=\alpha$. Also if $v(x)>\alpha$, it cannot happen that
$v(x')<\alpha$ (and the same applies to $y, y'$). We may conclude
that $\min\{v({x'}), v({y'})\}=\alpha$.
\end{proof}

Gathering together Proposition \ref{PrimeraFamilia} and Theorem
\ref{RelajarSimetria} we obtain, for each prime $\ell>3$, a large
family of abelian surfaces such that the action of the wild inertia
group upon their $\ell$-torsion points is trivial.

\begin{thm}\label{MainResult} Let $\ell>3$ be a prime number. Let $\overline{a}\in\F_{\ell}$ be such that $x^2 - x + \overline{a}$ divides the Deuring polynomial $H_{\ell}(x)$, and lift it to $a\in \Z_{\ell}$. Let $f_0, f_1, \dots, f_6\in \Z_{\ell}$ satisfy  that $f_6-f_0, f_5-f_1, f_4-f_2\in (\ell^4)$ and furthermore $f_6- 1, f_5, f_4- (1-a)/a, f_3\in (\ell)$. Then the equation
$y^2= f_6x^6 + f_5 x^5 + f_4 x^ 4 + f_3 x^3 + f_2 x^2 + f_1 x +
f_0\in \Z_{\ell}[x]$ defines a genus $2$ curve $C$ such that the
Galois extension $\Q_{\ell}(J(C))/\Q_{\ell}$ is tamely ramified.

\end{thm}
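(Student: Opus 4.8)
The plan is to assemble Theorem \ref{MainResult} directly from the two main building blocks already established: Theorem \ref{PrimeraFamilia}, which produces a symmetric genus $2$ curve whose Jacobian satisfies Hypothesis \ref{H} with $\alpha = \frac{1}{\ell^2-1}$, and Theorem \ref{RelajarSimetria}, which says that Hypothesis \ref{H} (with that same $\alpha$) is stable under perturbing the coefficients of a hyperelliptic equation by elements of $(\ell^4)$. So the whole argument is a matter of exhibiting, given the data of Theorem \ref{MainResult}, an auxiliary ``symmetric reference curve'' $C_0$ to which Theorem \ref{PrimeraFamilia} applies and which is $\ell^4$-close to $C$.

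First I would construct $C_0$. Given $a \in \Z_\ell$ lifting $\overline{a}$, set $f_2' := f_2$, $f_1' := f_1$, $f_0' := f_0$, and $f_6' := f_0$, $f_5' := f_1$, $f_4' := f_2$, $f_3' := f_3$. Thus $C_0$ is the symmetric curve $y^2 = f_0 x^6 + f_1 x^5 + f_2 x^4 + f_3 x^3 + f_2 x^2 + f_1 x + f_0$. I must check that this $C_0$ is exactly of the form covered by Theorem \ref{PrimeraFamilia}: that requires $f_0 - 1, f_1, f_2 - (1-a)/a, f_3 \in (\ell)$. But this is precisely the congruence hypothesis ``$f_6 - 1, f_5, f_4 - (1-a)/a, f_3 \in (\ell)$'' of Theorem \ref{MainResult} combined with ``$f_6 - f_0, f_5 - f_1, f_4 - f_2 \in (\ell^4) \subseteq (\ell)$'': indeed $f_0 \equiv f_6 \equiv 1$, $f_1 \equiv f_5 \equiv 0$, and $f_2 \equiv f_4 \equiv (1-a)/a \pmod{\ell}$. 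Hence Theorem \ref{PrimeraFamilia} applies to $C_0$, so its defining polynomial has non-zero discriminant (in particular $C_0$ is a genuine genus $2$ curve) and the formal group law attached to $J(C_0)$ satisfies Hypothesis \ref{H} with $\alpha = \frac{1}{\ell^2-1}$.

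Next I would compare $C_0$ with $C$ coefficient by coefficient. The coefficients of $C$ are $(f_6, f_5, f_4, f_3, f_2, f_1, f_0)$ and those of $C_0$ are $(f_0, f_1, f_2, f_3, f_2, f_1, f_0)$. The differences are $f_6 - f_0$, $f_5 - f_1$, $f_4 - f_2$, $0$, $0$, $0$, $0$, and the first three lie in $(\ell^4)$ by hypothesis. So $C$ and $C_0$ satisfy the hypothesis of Theorem \ref{RelajarSimetria} (with the roles of $C$ and $C'$ there played by $C_0$ and $C$ respectively). Since the formal group law of $J(C_0)$ satisfies Hypothesis \ref{H} with $\alpha = \frac{1}{\ell^2-1}$, Theorem \ref{RelajarSimetria} gives that the formal group law of $J(C)$ does too. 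Finally, Theorem \ref{resultado general} (whose hypothesis is exactly Hypothesis \ref{H} for $V$) shows that $I_{\wild}$ acts trivially on the $\ell$-torsion of $J(C)$, and triviality of the wild inertia action means precisely that $\Q_\ell(J(C))/\Q_\ell$ is tamely ramified, as claimed.

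The only genuinely delicate point is the bookkeeping at the start: one must be sure that the congruences in the statement of Theorem \ref{MainResult}, which mix an $(\ell^4)$-condition on the ``symmetrization defect'' $f_6 - f_0, f_5 - f_1, f_4 - f_2$ with an $(\ell)$-condition pinning down the reduction, do transfer correctly to the reference curve $C_0$ so that Theorem \ref{PrimeraFamilia} can be invoked. Once $C_0$ is correctly set up, no further computation is needed—everything else is a direct citation of the earlier results, and in particular I do not need to re-examine the discriminant of $C$ itself, since the genus $2$ (non-degeneracy) conclusion for $C$ is part of what Theorem \ref{RelajarSimetria} already presupposes of $C'$, or else can be noted separately as holding for coefficients sufficiently close to those of $C_0$.
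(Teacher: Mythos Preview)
Your proposal is correct and follows exactly the approach the paper intends: the paper's own ``proof'' is the single sentence preceding the theorem, ``Gathering together Proposition \ref{PrimeraFamilia} and Theorem \ref{RelajarSimetria}\ldots'', and you have simply spelled out in detail what that combination means---building the symmetric reference curve $C_0$, checking it falls under Theorem \ref{PrimeraFamilia}, and then invoking Theorem \ref{RelajarSimetria} and Theorem \ref{resultado general}. One small remark: strictly speaking the \emph{conclusion} of Theorem \ref{PrimeraFamilia} is tame ramification, not Hypothesis \ref{H}; what you actually need for $C_0$ is the intermediate statement of Theorem \ref{thm valoraciones 1} (which is what underlies Theorem \ref{PrimeraFamilia}), so it would be slightly cleaner to cite that directly when asserting Hypothesis \ref{H} for $J(C_0)$.
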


\bibliographystyle{amsalpha}

\begin{thebibliography}{99}



\bibitem{Tesis} Arias-de-Reyna, S. \emph{Galois representations and tame Galois realizations}, Ph.D. Thesis, Barcelona, June 2009. Available at

{\small\texttt{http://www.tesisenxarxa.net/TDX-0612109-101019/}}

\bibitem{Ariasdereyna-Vila2009}
Arias-de-Reyna, S. and N. Vila. \emph{Tame Galois realizations of
$\mathrm{GL}_2(\mathbb{F}_{\ell})$ over $\mathbb{Q}$}. Journal of
Number Theory, Volume \textbf{129}, Issue 5, May (2009), pages
1056-1065.

\bibitem{Paper2}
Arias-de-Reyna, S. and N. Vila. \emph{Tame Galois realizations of
$\GSp_4(\F_{\ell})$ over $\Q$}, preprint.


\bibitem{Becker1} Becker, T. \emph{Standard bases and some computations in
rings of power series}. J. Symbolic Comput. \textbf{10} (1990), no.
2, pages 165--178.


\bibitem{Birch} Birch, B. \emph{Noncongruence subgroups, Covers and
Drawings}, pages 25--46 in \emph{The Grothendieck theory of dessins
d'enfants}, Leila Schneps, editor. Cambridge Univ. Press (1994).

\bibitem{Bourbaki} Bourbaki, N.  \emph{\`El\'ements de math\'ematique.
Fascicule XXVIII. Alg\`ebre commutative}. Actualit\'es Scientifiques
et Industrielles, No. 1293 Hermann, Paris (1961).

\bibitem{Brillhart-Morton} Brillhart, J. and P. Morton. \emph{Class numbers of
quadratic fields, Hasse invariants of elliptic curves, and the
supersingular polynomial},  J. Number Theory \textbf{106}, no. 1,
pages 79--111 (2004).

\bibitem{Prolegomena} Cassels, J. W. S. and E. V. Flynn. \emph{Prolegomena to a Middlebrow Arithmetic
of Curves of Genus $2$}, London Matematical Society Lecture Note
Series 230, Cambridge University Press (1996).

\bibitem{Flynn} Flynn, E. V. \emph{The Jacobian and Formal Group of a curve of genus $2$ over an arbitrary ground
field}, Math. Proc. Cambridge Philos. Soc. \textbf{107} (1990), no.
3, pages 425--441.

\bibitem{Hazewinkel}
Hazewinkel, M. \emph{Formal Groups and Applications}. Academic Press
(1978).

\bibitem{HoweLeprevostPoonen}
Howe, E. W., F. Lepr\'evost and B. Poonen. \emph{Large torsion
subgroups of split Jacobians of curves of genus two or three}. Forum
Math. \textbf{12}  (2000),  no. 3, pages 315--364.

\bibitem{Mumford}
Mumford, D. \emph{Abelian Varieties}. Tata Institute of Fundamental
Research Studies in Mathematics, Bombay. Oxford University Press
(1974).

\bibitem{Proprietesgaloisiennes} Serre, J-P. \emph{Proprietes galoisi\'{e}nn\'{e}s
des points d'ordre fini des courbes elliptiques}, {Inventiones
math.} \textbf{15}, pages 259--331 (1972).

\bibitem{Silverman} Silverman, J. \emph{The Arithmetic of Elliptic Curves}, Graduate
texts in mathematics 106, Springer (1986).


\bibitem{p-divisibleGroups}
Tate, J. T. \emph{$p$-divisible groups}. 1967 Proc. Conf. Local
Fields (Driebergen, 1966) pages 158--183, Springer, Berlin.






\end{thebibliography}

\end{document}